\newtheorem{theorem}{Theorem}
\newtheorem{lemma}{Lemma}
\newtheorem{definition}{Definition}
\newtheorem{proof}{Proof}
\DeclareMathAlphabet{\pazocal}{OMS}{zplm}{m}{n}
\definecolor{mycolor}{rgb}{0.122, 0.435, 0.698}
\definecolor{blue-green}{rgb}{0.0, 0.87, 0.87}
\definecolor{royalblue}{rgb}{0.01, 0.28, 1.0}
\definecolor{bluet}{rgb}{0.1, 0.1, 0.8}
\newmdenv[innerlinewidth=1pt, roundcorner=0.5pt,linecolor=mycolor,innerleftmargin=0.5pt,
innerrightmargin=0.5pt,innertopmargin=0.4pt,innerbottommargin=0.4pt]{mybox}
\DeclareMathAlphabet{\pazocal}{OMS}{zplm}{m}{n}
\newcommand{\ba}{\begin{array}}
\newcommand{\ea}{\end{array}}
\newcommand{\be}{\begin{equation}}
\newcommand{\ee}{\end{equation}}
\newcommand{\ben}{\begin{equation*}}
\newcommand{\een}{\end{equation*}}
\newcommand{\bd}{\begin{displaymath}}
\newcommand{\ed}{\end{displaymath}}
\newcommand{\bi}{\begin{itemize}}
\newcommand{\ei}{\end{itemize}}
\newcommand{\bn}{\begin{enumerate}}
\newcommand{\en}{\end{enumerate}}
\newcommand{\f}{\frac}
\newcommand{\ve}{\varepsilon}
\newcommand{\bs}{\boldsymbol}
\newcommand{\mb}{\mathbf}
\newcommand{\bp}{\beta}
\newcommand{\mc}{\mathbcal}
\newcommand{\tauw}{\tau_w}
\newcommand{\tauu}{\tau_u}
\newcommand{\sigmau}{\sigma_u}
\newcommand{\sigmaw}{\sigma_w}
\newcommand{\paratr}{\beta_{inv}}
\newcommand{\vertiii}[1]{{\left\vert\kern-0.25ex\left\vert\kern-0.25ex\left\vert #1 
    \right\vert\kern-0.25ex\right\vert\kern-0.25ex\right\vert}}
\newtheorem{remark}{Remark}
\newtheorem{corollary}{Corollary}
\begin{document}

\title{A finite difference - discontinuous Galerkin method for the wave equation in second order form}
\author{Siyang Wang\thanks{Department of Mathematics and Mathematical Statistics, Umeå University, Umeå, Sweden. Email: siyang.wang@umu.se}  \and Gunilla Kreiss\thanks{Division of Scientific Computing, Department
    of Information Technology, Uppsala University, Uppsala, Sweden.}}
    \date{}
\maketitle

\begin{abstract}
 We develop a hybrid spatial discretization for the wave equation in second order form, based on high-order accurate finite difference methods and discontinuous Galerkin methods. The hybridization combines computational efficiency of finite difference methods on Cartesian grids and geometrical flexibility of discontinuous Galerkin methods on unstructured meshes. The two spatial discretizations are coupled by a penalty technique at the interface such that the overall semidiscretization satisfies a discrete energy estimate to ensure stability. In addition, optimal convergence is obtained in the sense that when combining a fourth order finite difference method with a discontinuous Galerkin method using third order local polynomials, the overall convergence rate is fourth order. Furthermore, we use a novel approach to derive an error estimate for the semidiscretization by combining the energy method and the normal mode analysis for a corresponding one dimensional model problem. The stability and accuracy analysis are verified in numerical experiments. 
\end{abstract}

\noindent \textbf{Keywords}:  finite difference methods, discontinuous Galerkin methods, hybrid methods, wave equations, normal mode analysis 

\noindent \textbf{AMS}: 65M06, 65M12

\section{Introduction}
Second order hyperbolic partial differential equations describe wave-dominated problems, for example the acoustic wave equation, the elastic wave equation and Einstein's equations of general relativity. In realistic models, waves propagate over long time in large domains with heterogeneous material properties and complex geometries. As a result, analytical solutions can  generally not be derived. Numerical simulation is a powerful alternative to seek an approximated solution to the governing equations. For time-dependent problems, it is important to use stable numerical methods that do not allow unphysical growth in the numerical solution. In addition, by the classical dispersion analysis \cite{Hagstrom2012,Kreiss1972}, high-order accurate numerical methods are more computationally efficient than low-order methods when the solution is sufficiently smooth. Over the years, there has been extensive work on stable and high-order numerical methods for wave propagation problems.

The finite difference (FD) method is conceptually simple, computationally efficient and easy to implement. Traditionally, it was challenging to derive stable and high-order FD discretizations for hyperbolic problems. This challenge has partly been overcome by using FD stencils with a summation-by-parts (SBP) property \cite{Kreiss1974}, in combination with the simultaneous-approximation-term (SAT) technique \cite{Carpenter1994} to impose boundary conditions. The integration-by-parts principle is the key ingredient to derive continuous energy estimates for the PDEs. The SBP-SAT methodology mimics the integration-by-parts principle for a discrete energy estimate to ensure that the semidiscretization is stable. The relation between the SBP-SAT FD method and the discontinuous Galerkin spectral element method is investigated in \cite{Gassner2013}. 

The FD method in its basic form is only applicable to problems on rectangular-shaped domains. For other shapes, a curvilinear grid based on coordinate transformation is used to resolve geometrical features \cite{Svard2004}. In general, the computational domain cannot be easily mapped to a reference domain. In this case, we decompose the computational domain into subdomains and use a multiblock FD approach. 
The multiblock SBP-SAT methods on curvilinear grid have been derived for the wave equation \cite{Virta2014} and the elastic wave equation \cite{Duru2014V} in second order form. This approach works well on nearly Cartesian grids but is not suitable in many realistic models with complex geometry, because it is difficult to find a smooth coordinate transformation. 

Recently, there have been efforts in hybridizing the FD discretization with a Galerkin method on unstructured meshes so that the overall discretization is both computationally efficient and geometrically flexible. The main difficulty originates from the fact that the two discretizations have different discrete $l^2$ inner product. This scenario also occurs at an FD-FD  discretization with different grid sizes, i.e. nonconforming grid interfaces. For the wave equation in first order form, SBP-preserving interpolation operators are constructed in \cite{Mattsson2010} for an FD-FD nonconforming interface with grid size ratio 1:2. With an SBP operator of interior order $2p$, the observed convergence rate in numerical experiments is $p+1$, which is the same as a multiblock FD with only conforming grid interfaces.  
In \cite{Kozdon2016}, the SBP FD is coupled with the discontinuous Galerkin (DG) method by using a  projection technique that preserves the SBP property and the semidiscretization satisfies an energy estimate. With an SBP operator of interior order $2p$ and the DG method based on local polynomials of degree $p$, the observed convergence rate in numerical experiments is $p+1$. There has also been important work on the hybridization of the SBP FD discretization with the finite element method for the isotropic elastic wave equation \cite{Gao2019} and the conservation law \cite{Dao2022}, with a focus on stability rather than accuracy. 

In this paper, we consider the wave equation in second order form. Comparing to first order form, solving the wave equations in second order form has advantages. There are fewer unknown variables, thus requiring less computation and memory storage. In addition, when imposing the boundary and interface conditions properly,  the SBP FD discretization based on operators of interior order $2p$ can converge to order $p+2$ , i.e. one order higher than solving the same equation in first order form. However, it is challenging to solve the wave equations in second order form from both stability and accuracy aspects. A  generalization of the interpolation technique from \cite{Mattsson2010} to the wave equation in second order form converges only to suboptimal order $p+1$. For stability, an additional norm-contraction constraint on the interpolation operators is required. This additional constraint is removed by using a new SAT technique \cite{Wang2018}, which does not simultaneously improve the accuracy property. In \cite{Almquist2019}, the optimal convergence rate  $p+2$ is recovered by using two pairs of order-preserving interpolation operators. 

The first contribution of this paper is an FD-DG spatial discretization for the wave equation in two space dimension in second order form. We construct novel projection operators to combine the SBP FD discretization with the symmetric interior penalty discontinuous Galerkin (IPDG) method \cite{Grote2006}. The overall discretization satisfies a discrete energy estimate to guarantee stability. In addition, the FD-DG discretization converges to the optimal order in the sense that with SBP operators of interior order four and the IPDG based on local polynomials of degree three, the observed convergence rate is four. 

Our second contribution is a new framework for the accuracy analysis of the FD-DG discretization. A priori error estimates for the DG discretization are often derived by the energy method using special projection operators and approximation theory \cite{Hesthaven2008}, whereas sharp error estimates for the FD discretization is derived by the normal mode analysis in Laplace space \cite{Gustafsson2008,Gustafsson2013}. Though both are well-established, they are two distinct approaches. To analyze the accuracy of the FD-DG discretization, we consider the wave equation in one space dimension and cast the DG scheme into matrix form, and realize its components as difference stencils. It is well-known that the resulting DG truncation error indicates a suboptimal convergence rate. By a careful analysis of the truncation error in the discrete norm associated with the DG discretization, we obtain sharp error estimates by the energy method  for the DG discretization. After that, we combine it with the normal mode analysis for the FD-DG interface treatment and obtain an optimal convergence rate for the overall discretization. 

The rest of the paper is organized as follows. In Sec. 2, we introduce an FD-DG spatial discretization for the wave equation in one space dimension.  After that, we present our novel approach for deriving an apriori error estimate for the hybridization. In Sec. 3, we start with projection operators that are used in the numerical scheme for the wave equation in two space dimension. We then analyze the stability property of the overall discretization by deriving a discrete energy estimate. Numerical examples are presented in Sec. 4 to verify the theoretical results. In the end, we draw conclusion in Sec. 5.

\section{Spatial discretization in 1D and error analysis}
In this section, we start by introducing the concept of SBP and its important properties, and deriving an FD-DG spatial discretization of the wave equation in one space dimension.  After that, we present a novel approach for accuracy analysis and derive an a priori error estimate for the FD-DG semidiscretization. 

\subsection{Summation-by-parts finite difference operators}
Consider a bounded interval $I$ that is discretized by a uniform grid $x_i,\ i=1,2,\cdots,n$ with grid spacing $h$. Let $f,g\in C^{\infty}(I)$ and define the grid functions $f_i = f(x_i),g_i = g(x_i)$, and vectors
\[
\mb{f} = [f_1, f_2, \cdots, f_n]^T,\quad \mb{g} = [g_1, g_2, \cdots, g_n]^T.
\]
We also define the standard $L^2$ inner product $(f,g)_I=\int_I fgdx$, and a discrete $l^2$ norm $\|\mb{f}\|=\sqrt{h\sum_{i=1}^n |f_i|^2}$.

Next, we consider the finite difference approximation of the second derivative, $D\approx \f{d^2}{dx^2}$. The SBP property of $D$ is defined as follows \cite{Mattsson2004}. 
\begin{definition}[second derivative SBP property]\label{defD2}
The finite difference operator $D\approx \f{d^2}{dx^2}$ is a second derivative SBP operator if it can be written as  
\be\label{D2}
D=H^{-1}(-A+\mb{e}_n\mb{d}_n^T
-\mb{e}_1\mb{d}_1^T),
\ee 
where $\mb{e}_n=[0,0,\cdots,0,1]^T$ and $\mb{e}_1=[1,0,\cdots,0]^T$. 
The first derivative approximations are $\mb{d}_1^T \mb{f}\approx \f{df}{dx}(x_1)$ and $\mb{d}_n^T \mb{f}\approx \f{df}{dx}(x_n)$. The operator $H$ is symmetric positive definite, and $A$ is symmetric positive semidefinite. 
\end{definition}

The operator $H$ defines a discrete inner product and norm, and is also a quadrature  \cite{Hicken2013}. Similarly, the operator $A$ defines a discrete semi-norm. They satisfy the relations,
\[
\mb{f}^T H \mb{g}\approx \int_{x_1}^{x_n} fgdx,\quad  \mb{f}^T A \mb{g}\approx \int_{x_1}^{x_n} \f{df}{dx}\f{dg}{dx}dx.
\]
We recognize $H$ and $A$ as the mass and stiffness matrix for a Galerkin method. 

In the interior, the SBP operators $D$ are based on standard central finite difference stencils with truncation error $\mathcal{O}(h^{2p})$.  On a few grid points near boundaries, one-sided stencils are used to satisfy the SBP property. When $H$ is diagonal, the truncation error of the one-sided boundary stencil can at best be $\mathcal{O}(h^{p})$. The truncation error of the first derivative approximation at the boundaries is $\mathcal{O}(h^{p+1})$. We denote the order of accuracy of $D$ as $(2p,p)$.  
The SBP property of \eqref{D2} can also be written as 
\[
\mb{g}^T HD\mb{f}=-\mb{g}^T A \mb{f} + \mb{g}^T \mb{e}_n\mb{d}_n^T\mb{f}- \mb{g}^T \mb{e}_1\mb{d}_1^T\mb{f},
\]
which is a discrete analogue of the integration-by-parts formula,
\[
\int_{x_1}^{x_n} g f_{xx} dx = -\int_{x_1}^{x_n} g_xf_x dx +g(x_n)f_x(x_n)-g(x_1)f_x(x_1). 
\]

A so-called \textit{borrowing technique} of the SBP operator $D$ is important for proving stability for certain problems, such as the wave equation with Dirichlet boundary conditions \cite{Mattsson2009} and material interface conditions \cite{Mattsson2008}. It is also used to derive an energy estimate for a dual-consistent discretization of the heat equation \cite{Eriksson2018}. The \textit{borrowing capacity} for the \textit{borrowing technique} is defined as follows. 
\begin{definition}[borrowing capacity]\label{lemB}
The borrowing capacity is the maximum value of $\bp>0$ such that 
\[
\tilde A=A - \bp h (\mb{d}_1\mb{d}_1^T + \mb{d}_n\mb{d}_n^T)  
\]
  is symmetric positive semidefinite. Here, $h$ is the grid spacing, $\mb{d}_1$ and $\mb{d}_n$ are the same first derivative operators as in \eqref{D2}. 
  \end{definition}
\begin{remark}
The borrowing capacity depends on the order of accuracy of the SBP operator but does not depend on $h$. For the precise values of the borrowing capacity, see   \cite{Eriksson2021,Mattsson2008,Mattsson2009}. 
The borrowing technique is a finite difference analogue to using the inverse inequality to derive estimates for finite element methods. To see this relation, we write 
\[
\mb{f}^TA\mb{f} - \bp h \mb{f}^T(\mb{d}_1\mb{d}_1^T + \mb{d}_n\mb{d}_n^T) \mb{f} = \mb{f}^T\tilde A\mb{f}\geq 0,
\]
which leads to 
\[
\mb{f}^TA\mb{f} \geq \beta h ((\mb{d}_1^T \mb{f})^2+(\mb{d}_n^T \mb{f})^2).
\]
Recalling $\mb{f}^TA\mb{f}\approx \int_{x_1}^{x_n} (\f{df}{dx})^2dx$, $\mb{d}_1^T \mb{f}\approx \f{df}{dx}(x_1)$, and $\mb{d}_n^T \mb{f}\approx \f{df}{dx}(x_n)$ the above relation is a discrete analogue of the inverse inequality \cite{Brenner2007}. 
\end{remark}

\subsection{An FD-DG discretization in 1D}
An SBP operator only approximates a derivative but does not impose any boundary condition. When solving an initial-boundary-value problem, the SAT technique is often used to impose boundary and interface conditions weakly. The main idea of SAT is to add penalty terms in the semidiscretization such that a discrete energy estimate can be obtained. For accuracy, it is important that the penalty terms converge to zero as the mesh size goes to zero. The SBP-SAT discretization for the wave equation in second order form was derived for various boundary conditions \cite{Appelo2007,Mattsson2009,Mattsson2004} and material interface conditions \cite{Mattsson2008}.

In the IPDG method \cite{Grote2006}, boundary and material interface conditions are naturally imposed by using numerical fluxes. In the following, we use the wave equation in one space dimension as the model problem, and derive a  stable FD-DG semidiscretization. In this case, the interface between the two semidiscretizations is only a point in space and the numerical treatment does not involve the difficulties for higher dimensional problems. Nonetheless, the scheme and stability analysis for the one dimensional model problem demonstrate the penalty technique to combine the FD and DG semidiscretizations and prepare for the accuracy analysis afterwards. 

For the analysis, we consider 
\[
U_{tt} = U_{xx},\quad x\in (-\infty, \infty),\quad t\in (0,T],
\]
with smooth initial conditions with bounded support. We discretize the equation in space by the SBP FD method  in $x\in (-\infty, 0)$, and the IPDG in $x\in (0,\infty)$. At the FD-DG interface at $x=0$, we impose the interface conditions $U(0^-,t)=U(0^+,t)$ and $U_x(0^-,t)=U_x(0^+,t)$ weakly. 

\begin{figure}
\centering
\includegraphics[trim={0 5cm 0 4cm},clip,width=0.6\textwidth]{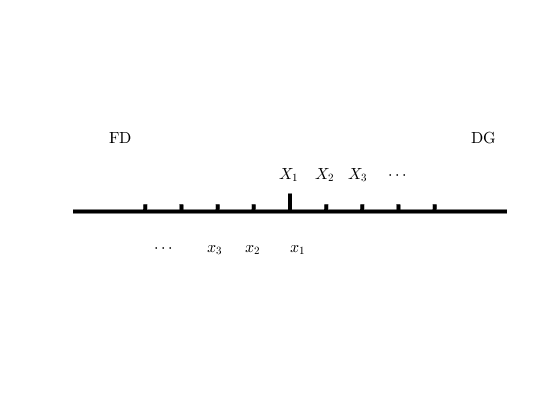}
\caption{An FD grid and DG elements in one space dimension. }
\label{Domain1D}
\end{figure}

We discretize the FD domain $(-\infty,0)$ by a uniform grid $x_j = -(j-1)h$, where $j=1,2,3,\cdots$ and $h$ is the grid spacing. In the DG domain, we partition $(0,\infty)$ into disjoint elements $I_j=(X_j, X_{j+1})$ with $j=1,2,3,\cdots$. For simplicity, we assume that the elements have equal length such that $X_j = (j-1)h,\ j=1,2,3,\cdots$. We note that the points $x_1$ and $X_1$ coincide at the FD-DG interface, see Figure \ref{Domain1D}. We also note that the degrees of freedom (DOFs) are duplicated on the inter-element interfaces $X_j,\ j=2,3,\cdots$, on the DG side. 

\subsection{Stability of the FD-DG discretization in 1D}
The FD discretization can be written as
\begin{align}
\begin{split}
\mc{w}_{tt} = &H^{-1} (-A+\mb{e}_n\mb{d}_n^T) \mc{w}\\
& -\frac{1}{2}H^{-1} \mb{e}_n (\mb{d}_n^T \mc{w} -u_{x\Gamma}^{(1)}) +\frac{1}{2}H^{-1}\mb{d}_n( \mb{e}_n^T  \mc{w}-u_{\Gamma}^{(1)})-\frac{\tau}{h}H^{-1} \mb{e}_n (\mb{e}_n^T\mc{w}-u_{\Gamma}^{(1)}),
\end{split}
\label{FD1d}
\end{align}
where $\mc{w}=[w_1, w_2,\cdots]^T$ is the finite difference solution, $w_j\approx U(x_j, t), j=1,2,\cdots$. On the right-hand side, the first term is the approximation of $U_{xx}$, and the last three terms impose weakly the interface conditions. More precisely, the second term imposes continuity of $U_x$, and the third and fourth terms impose weakly continuity of $U$. The terms $u_{\Gamma}^{(1)}$ and $u_{x\Gamma}^{(1)}$ are the DG solution and its derivative at the interface, i.e. $u_{\Gamma}^{(1)}=u^{(1)}(X_1,t)$ and $u_{x\Gamma}^{(1)}=u_x^{(1)}(X_1,t)$. We note that \eqref{FD1d} is a generalization of the SBP-SAT scheme for the 1D wave equation with a material interface \cite{Mattsson2008}. 

For the DG solution, for every fixed time we seek solution in the following space
\begin{equation}\label{Vh}
V_h^k = \{  v: v|_{I_j}\in\mathcal{P}^k(I_j),\ j=1,2,\cdots  \},
\end{equation}
where $\mathcal{P}^k(I_j)$ denotes the space of polynomials of degree at most $k$ in $I_j$. The DG discretization reads: for any fixed $t$, find $u \in V_h^k$  such that 
\begin{align}
\begin{split}
(u_{tt}^{(j)}, \phi^{(j)})_{I_j} =& -(u_x^{(j)}, \phi_x^{(j)})_{I_j} + (u_x^{(j)}, \phi^{(j)})_{X_{j+1}}-(u_x^{(j)}, \phi^{(j)})_{X_{j}}\\
 &-\frac{1}{2}(u_x^{(j)}-u_x^{(j+1)}, \phi^{(j)})_{X_{j+1}}+\frac{1}{2}(u^{(j)}-u^{(j+1)},\phi_x^{(j)})_{X_{j+1}}-\frac{\tau}{h}(u^{(j)}-u^{(j+1)}, \phi^{(j)})_{X_{j+1}}\\
 &+\frac{1}{2}(u_x^{(j)}-u_x^{(j-1)}, \phi^{(j)})_{X_{j}}-\frac{1}{2}(u^{(j)}-u^{(j-1)}, \phi_x^{(j)})_{X_{j}}-\frac{\tau}{h}(u^{(j)}-u^{(j-1)}, \phi^{(j)})_{X_{j}},
 \end{split}
 \label{DG1d}
\end{align}
for all $\phi^{(j)}\in\mathcal{P}^k(I_j)$ and $j=1,2,\cdots.$ In \eqref{DG1d}, the first line is obtained by using the integration-by-parts formula. The three terms on the second line of \eqref{DG1d} are numerical fluxes for element $I_j$ and $I_{j+1}$. Similarly, the three terms on the third line of \eqref{DG1d} are numerical fluxes for element $I_j$ and $I_{j-1}$. When $j=1$, the values of $u^{(0)}(X_1,t)$ and $u_x^{(0)}(X_1,t)$ are obtained from the FD solutions. More precisely, we define  $u^{(0)}(X_1,t)=\mb{d}_n^T \mc{w}$ and $u_x^{(0)}(X_1,t)=\mb{e}_n^T \mc{w}$. 

For proving stability of the semidiscretization \eqref{FD1d}-\eqref{DG1d}, we need to use the standard inverse inequality \cite{Brenner2007} formulated in the following lemma.
\begin{lemma}[Inverse inequality] \label{lemma_inv}
For any function $u^{(j)}\in \mathcal{P}^k(I_j)$, $j=1,2,\cdots$, there exists a constant $\paratr$ such that 
\[
(u_x^{(j)}, u_x^{(j)})_{I_j} \leq \paratr h^{-1} ((u_x^{(j)}|_{X_j})^2+(u_x^{(j)}|_{X_{j+1}})^2),\quad j=1,2,\cdots,
\]
where $\paratr>0$ is a constant that depends on $k$ but not $h$. 
\end{lemma}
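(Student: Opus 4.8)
The statement to prove is an inequality in the \emph{unusual} direction: it bounds the interior $L^2$ norm of the derivative $u_x^{(j)}$ by the sum of the squared \emph{endpoint} values of $u_x^{(j)}$. This is the reverse of the familiar inverse-trace inequality, and at first glance it looks too strong. The key observation that makes it valid is that $u^{(j)}\in\mathcal{P}^k(I_j)$, so $u_x^{(j)}\in\mathcal{P}^{k-1}(I_j)$ is a polynomial of degree at most $k-1$, and the two values $(u_x^{(j)}|_{X_j}, u_x^{(j)}|_{X_{j+1}})$ are two \emph{point evaluations} of this polynomial. For a generic polynomial of degree $k-1\ge 2$, two point values cannot control the whole function; the inequality as literally written can only hold for the \emph{linear} case $k=1$ (where $u_x$ is constant) unless there is an implicit assumption that $k$ is small, or unless the intended reading restricts attention to the relevant DG space used in the scheme. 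I would therefore first pin down the correct scaling and the admissible range of $k$, then proceed by a reference-element and scaling argument.

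\textbf{Main steps.} First I would map everything to the reference element $\hat I=(0,1)$ via the affine change of variables $x = X_j + h\,\hat x$. Under this map $u_x^{(j)}(x) = h^{-1}\hat u_{\hat x}(\hat x)$ for the pulled-back polynomial $\hat u\in\mathcal{P}^k(\hat I)$, and $dx = h\,d\hat x$, so
\[
(u_x^{(j)}, u_x^{(j)})_{I_j} = h^{-1}\int_0^1 (\hat u_{\hat x})^2\,d\hat x,
\qquad
(u_x^{(j)}|_{X_j})^2+(u_x^{(j)}|_{X_{j+1}})^2 = h^{-2}\big((\hat u_{\hat x}(0))^2+(\hat u_{\hat x}(1))^2\big).
\]
This already exposes the $h^{-1}$ factor on the right-hand side of the claimed inequality and shows the estimate reduces to the scale-invariant statement on $\hat I$: there exists $\paratr>0$, depending on $k$ only, with
\[
\int_0^1 (\hat u_{\hat x})^2\,d\hat x \;\le\; \paratr\,\big((\hat u_{\hat x}(0))^2+(\hat u_{\hat x}(1))^2\big)
\qquad\text{for all }\hat u\in\mathcal{P}^k(\hat I).
\]
Second, I would establish this reference estimate. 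Writing $p=\hat u_{\hat x}\in\mathcal{P}^{k-1}(\hat I)$, the two sides are respectively a positive semidefinite quadratic form $p\mapsto \int_0^1 p^2$ and the quadratic form $p\mapsto p(0)^2+p(1)^2$ on the finite-dimensional space $\mathcal{P}^{k-1}(\hat I)$. The left form is a genuine norm (the $L^2$ norm). The right form is a norm \emph{precisely when} the only polynomial in $\mathcal{P}^{k-1}$ vanishing at both $0$ and $1$ is the zero polynomial, i.e. when $k-1\le 1$, equivalently $k\le 2$. In that regime, both forms are norms on a finite-dimensional space, hence equivalent, and the optimal $\paratr$ is the largest eigenvalue of $M$ relative to $N$, where $M_{mn}=\int_0^1 \psi_m\psi_n$ and $N_{mn}=\psi_m(0)\psi_n(0)+\psi_m(1)\psi_n(1)$ for a fixed basis $\{\psi_m\}$ of $\mathcal{P}^{k-1}$. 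This $\paratr$ is a pure number independent of $h$, which is exactly the claim. Third, I would record that for the DG space actually used in the scheme — degree $k\le 2$, which includes the cubic case once one notes the derivative lands in $\mathcal{P}^{k-1}$ — the hypothesis of the equivalence holds, and state the constant explicitly in the relevant low-order cases.

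\textbf{Main obstacle.} The real difficulty is the validity range, not the calculation: for $k\ge 3$ the right-hand quadratic form $p(0)^2+p(1)^2$ is genuinely degenerate on $\mathcal{P}^{k-1}$ (e.g. $p(\hat x)=\hat x(1-\hat x)\in\mathcal{P}^2$ vanishes at both endpoints yet has positive $L^2$ norm), so no finite $\paratr$ can exist and the inequality fails as stated. I would therefore expect the crux of the proof to be a careful statement of the admissible $k$, and I would either restrict the lemma to the range in which it is used or replace the two endpoint evaluations by a controlling set of point values sufficient to determine $u_x^{(j)}$ on $I_j$. Once the degeneracy question is settled, the remaining argument is the routine finite-dimensional norm-equivalence and affine-scaling computation outlined above.
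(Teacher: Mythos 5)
Your diagnosis of the printed statement is sharp and correct: as literally written, the inequality fails for every $k\ge 3$, since $u_x^{(j)}\in\mathcal{P}^{k-1}(I_j)$ can vanish at both endpoints without vanishing identically (your example $\hat x(1-\hat x)$), and the paper itself works with $k=3$ in the accuracy analysis. Note also that the paper offers no proof — the lemma is quoted as standard from a finite element reference — and the form in which it is actually invoked, in \eqref{trace} of the stability proof, is the \emph{reverse} inequality
$(u_x^{(j)}, u_{x}^{(j)})_{I_j} \geq \paratr h \bigl((u_x^{(j)}|_{X_j})^2+(u_x^{(j)}|_{X_{j+1}})^2\bigr)$,
i.e.\ the standard discrete (inverse) trace inequality, with the two sides and the power of $h$ swapped relative to the printed lemma. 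So the printed statement is best read as a typo, and the object to prove is the inequality as used.

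Where your proposal goes wrong is the repair. You restrict validity to $k\le 2$ and then assert this "includes the cubic case once one notes the derivative lands in $\mathcal{P}^{k-1}$" — it does not: for cubics $k=3$ the derivative lies in $\mathcal{P}^{2}$, precisely the space of your own counterexample, so your salvage is internally inconsistent and excludes the one case the scheme needs. The correct fix is to flip the inequality to match \eqref{trace}: on the reference element one needs $p(0)^2+p(1)^2 \le C_k \int_0^1 p^2\, d\hat x$ for all $p\in\mathcal{P}^{k-1}(0,1)$, which holds for \emph{every} $k$ by your own finite-dimensional argument, because point evaluation is a bounded functional on a finite-dimensional space and degeneracy of the endpoint form is harmless when it sits on the small side. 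Your affine-scaling reduction (which is exactly the standard route, and correctly executed) then yields \eqref{trace} with $\paratr = 1/(2C_k)$ depending only on $k$. In short: right diagnosis, right machinery, wrong resolution — as submitted, the proposal proves a statement that does not cover the scheme's $k=3$ and does not deliver the inequality the stability proof consumes.
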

We state the stability property of the FD-DG discretization \eqref{FD1d}-\eqref{DG1d} in the following theorem, and  prove it by deriving a discrete energy estimate. 

\begin{theorem}[Stability in 1D]\label{ec1d}
If $\tau\geq \frac{1}{2\tilde\beta}$ in the FD-DG semidiscretization \eqref{FD1d}-\eqref{DG1d} with $\tilde\beta=\max(\beta, \paratr)$, then 
\begin{align*}
 E_h :=\ & \mc{w}_t^TH\mc{w}_t +\sum_{j=1}^{\infty} (u_{t}^{(j)}, u_{t}^{(j)})_{I_j} + \mc{w}^T A\mc{w} + \sum_{j=1}^{\infty}  (u_x^{(j)}, u_{x}^{(j)})_{I_j} \\
&-(\mb{e}_n^T \mc{w}-u_{\Gamma}^{(1)})(\mb{d}_n^T\mc{w}+u_{x\Gamma}^{(1)})+\frac{\tau}{h}(\mb{e}_n^T\mc{w}-u_{\Gamma}^{(1)})^2\\
&+\sum_{j=2}^{\infty}(-(u^{(j-1)}-u^{(j)})(u_x^{(j-1)}+u_x^{(j)})+\frac{\tau}{h}(u^{(j-1)}-u^{(j)})^2)|_{X_j}
\end{align*}
defines a semidiscrete energy $E_h\geq 0$ and satisfies 
$
\frac{d}{dt}E_h = 0.
$ 
\end{theorem}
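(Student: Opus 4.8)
The plan is to establish the two claims separately by the energy method: first $\frac{d}{dt}E_h=0$, then $E_h\ge 0$, with the hypothesis on $\tau$ entering only in the second part. To prove conservation I would differentiate $E_h$ and substitute the scheme. Multiplying \eqref{FD1d} on the left by $\mc{w}_t^T H$, choosing $\phi^{(j)}=u_t^{(j)}$ in \eqref{DG1d}, and summing over $j$ generates $\frac{d}{dt}$ of the two kinetic terms. Using the symmetry of $H$ and $A$ together with the SBP decomposition \eqref{D2}, the bulk contribution on the finite difference side is $-\mc{w}_t^T A\mc{w}=-\tfrac12\frac{d}{dt}(\mc{w}^T A\mc{w})$, and on the DG side the volume term is $-(u_x^{(j)},u_{tx}^{(j)})_{I_j}=-\tfrac12\frac{d}{dt}(u_x^{(j)},u_x^{(j)})_{I_j}$; summed over $j$, these exactly account for the time derivatives of the bulk potential terms $\mc{w}^T A\mc{w}$ and $\sum_j(u_x^{(j)},u_x^{(j)})_{I_j}$ and cancel. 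Because the data have bounded support, the element sums are finite at each time and nothing survives at infinity.

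What remains after this cancellation are surface terms supported at the FD--DG interface $X_1$ and the inter-element interfaces $X_j$, $j\ge2$. The crux is to show that these are precisely the time derivatives of the remaining, indefinite, lines of $E_h$. At an interior interface $X_j$ the consistent fluxes $(u_x^{(j)},u_t^{(j)})_{X_j}$ contributed by the two adjacent elements, together with the jump terms carrying the factors $\tfrac12$ and $\tfrac{\tau}{h}$, recombine by the product rule into $\frac{d}{dt}\big[-(u^{(j-1)}-u^{(j)})(u_x^{(j-1)}+u_x^{(j)})+\tfrac{\tau}{h}(u^{(j-1)}-u^{(j)})^2\big]_{X_j}$, which is exactly the corresponding line of $E_h$. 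The same mechanism at $X_1$ couples the finite difference surface terms produced by \eqref{FD1d} with the $j=1$ DG fluxes, through the FD--DG coupling values $u^{(0)}$ and $u_x^{(0)}$ at $X_1$, to reproduce $\frac{d}{dt}$ of the FD--DG line of $E_h$. I expect this matching --- keeping every sign and every factor $\tfrac12$ correct at $X_1$ --- to be the main obstacle; the penalty coefficients in \eqref{FD1d}--\eqref{DG1d} are chosen to be exactly those that integrate the surface terms into the energy corrections, so the verification is delicate but ultimately mechanical bookkeeping.

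For $E_h\ge0$, the kinetic terms and the bulk potential terms $\mc{w}^T A\mc{w}$ and $\sum_j(u_x^{(j)},u_x^{(j)})_{I_j}$ are non-negative, and only the products $-(\mb{e}_n^T\mc{w}-u_\Gamma^{(1)})(\mb{d}_n^T\mc{w}+u_{x\Gamma}^{(1)})$ at $X_1$ and $-(u^{(j-1)}-u^{(j)})(u_x^{(j-1)}+u_x^{(j)})$ at $X_j$ are indefinite. I would control the derivative traces inside them by the bulk energies: the borrowing technique of Definition \ref{lemB} gives $\mc{w}^T A\mc{w}\ge\beta h(\mb{d}_n^T\mc{w})^2$, and the inverse inequality of Lemma \ref{lemma_inv} controls the derivative traces by the bulk gradient energy, $h\big((u_x^{(j)}|_{X_j})^2+(u_x^{(j)}|_{X_{j+1}})^2\big)\le \paratr^{-1}(u_x^{(j)},u_x^{(j)})_{I_j}$. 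Completing the square in each interface jump against its own penalty term $\tfrac{\tau}{h}(\cdot)^2$ and absorbing the leftover derivative-trace squares into reserved fractions of the bulk energies, each interface reduces to the positive semidefiniteness of a small quadratic form in the jump and the two adjacent derivative traces; matching the constants shows this holds once $\tau\ge\frac{1}{2\tilde\beta}$ with $\tilde\beta=\max(\beta,\paratr)$.

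To pass from the per-interface estimates to $E_h\ge0$, I would distribute each element's gradient energy $(u_x^{(j)},u_x^{(j)})_{I_j}$ between its two endpoints $X_j$ and $X_{j+1}$, and likewise reserve the borrowed portion $\beta h(\mb{d}_n^T\mc{w})^2$ of $\mc{w}^T A\mc{w}$ for the interface $X_1$ alone, so that no bulk term is charged twice; summing the non-negative per-interface forms over all $j$ then yields $E_h\ge0$. The decisive difficulty remains the exact cancellation of the surface terms in $\frac{d}{dt}E_h$, since it is this step that pins down the precise form of the energy that the positivity argument must then dominate.
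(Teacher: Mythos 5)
Your proposal follows essentially the same route as the paper's proof: the same multipliers ($\mc{w}_t^TH$ on the FD side, $\phi^{(j)}=u_t^{(j)}$ on the DG side) yield conservation with $\tau$ playing no role there, and positivity is obtained by the same combination of the borrowing technique and the inverse inequality, splitting each element's gradient energy between its two endpoints and reserving the borrowed term for the FD--DG interface, reducing each interface to a small quadratic form that is nonnegative precisely when $\tau\geq\frac{1}{2\tilde\beta}$. The only cosmetic difference is that the paper assembles the reserved bulk fractions first (inequality \eqref{wAwtrace}) and then checks a discriminant condition, whereas you complete the square interface by interface; these are the same computation.
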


\begin{proof}
We multiply \eqref{FD1d} by $\mc{w}_t^TH$ from the left, and obtain
\begin{align*}
\mc{w}_t^TH\mc{w}_{tt} = -\mc{w}_t^T A\mc{w}  + \frac{1}{2}\mc{w}_t^T\mb{e}_n\mb{d}_n^T\mc{w} + \frac{1}{2}\mc{w}_t^T\mb{d}_n\mb{e}_n^T\mc{w}+\frac{1}{2}\mc{w}_t^T\mb{e}_n u_{x\Gamma}^{(1)} -\frac{1}{2}\mc{w}_t^T\mb{d}_n u_{\Gamma}^{(1)}   -\frac{\tau}{h}\mc{w}_t^T\mb{e}_n\mb{e}_n^T\mc{w}+\frac{\tau}{h}\mc{w}_t^T\mb{e}_n u_{\Gamma}^{(1)}. 
\end{align*}
In the DG part, we choose $\phi^{(j)}$ to be $u_t^{(j)}$ in \eqref{DG1d}, and sum in $j$ to obtain
\small
\begin{align*}
\sum_{j=1}^{\infty} (u_{tt}^{(j)}, u_{t}^{(j)})_{I_j} =& -\sum_{j=1}^{\infty}  (u_x^{(j)}, u_{tx}^{(j)})_{I_j} \\
&+ \sum_{j=1}^{\infty} \left[\frac{1}{2}u_x^{(j)}u_t^{(j)}+ \frac{1}{2}u^{(j)}u_{tx}^{(j)}+ \frac{1}{2}u_x^{(j+1)}u_t^{(j)}- \frac{1}{2}u^{(j+1)}u_{tx}^{(j)}- \frac{\tau}{h}u^{(j)}u_t^{(j)}+\frac{\tau}{h}u^{(j+1)}u_t^{(j)}\right]\bigg|_{X_{j+1}} \\
&+ \sum_{j=1}^{\infty} \left[-\frac{1}{2}u_x^{(j)}u_t^{(j)}- \frac{1}{2}u^{(j)}u_{tx}^{(j)}- \frac{1}{2}u_x^{(j-1)}u_t^{(j)}+ \frac{1}{2}u^{(j-1)}u_{tx}^{(j)}- \frac{\tau}{h}u^{(j)}u_t^{(j)}+\frac{\tau}{h}u^{(j-1)}u_t^{(j)}\right]\bigg|_{X_{j}}.
\end{align*}  
\normalsize
Next, we add the above two equations and separate terms for $j=1$ and $j>1$,
\small
\begin{align*}
&\mc{w}_t^TH\mc{w}_{tt} +\sum_{j=1}^{\infty} (u_{tt}^{(j)}, u_{t}^{(j)})_{I_j} \\
=& -\mc{w}_t^T A\mc{w} - \sum_{j=1}^{\infty}  (u_x^{(j)}, u_{tx}^{(j)})_{I_j} \\
&+ \frac{1}{2}\mc{w}_t^T\mb{e}_n\mb{d}_n^T\mc{w} + \frac{1}{2}\mc{w}_t^T\mb{d}_n\mb{e}_n^T\mc{w}+\frac{1}{2}\mc{w}_t^T\mb{e}_n u_{x\Gamma}^{(1)} -\frac{1}{2}\mc{w}_t^T\mb{d}_n u_{\Gamma}^{(1)}   -\frac{\tau}{h}\mc{w}_t^T\mb{e}_n\mb{e}_n^T\mc{w}+\frac{\tau}{h}\mc{w}_t^T\mb{e}_n u_{\Gamma}^{(1)} \\
&+ \left[-\frac{1}{2}u_x^{(1)}u_t^{(1)}- \frac{1}{2}u^{(1)}u_{tx}^{(1)}- \frac{1}{2}u_x^{(0)}u_t^{(1)}+ \frac{1}{2}u^{(0)}u_{tx}^{(1)}- \frac{\tau}{h}u^{(1)}u_t^{(1)}+\frac{\tau}{h}u^{(0)}u_t^{(1)}\right]\bigg|_{X_{1}}\\
&+ \sum_{j=2}^{\infty} \left[-\frac{1}{2}u_x^{(j)}u_t^{(j)}- \frac{1}{2}u^{(j)}u_{tx}^{(j)}- \frac{1}{2}u_x^{(j-1)}u_t^{(j)}+ \frac{1}{2}u^{(j-1)}u_{tx}^{(j)}- \frac{\tau}{h}u^{(j)}u_t^{(j)}+\frac{\tau}{h}u^{(j-1)}u_t^{(j)}\right]\bigg|_{X_{j}}\\
&+ \sum_{j=1}^{\infty} \left[\frac{1}{2}u_x^{(j)}u_t^{(j)}+ \frac{1}{2}u^{(j)}u_{tx}^{(j)}+ \frac{1}{2}u_x^{(j+1)}u_t^{(j)}- \frac{1}{2}u^{(j+1)}u_{tx}^{(j)}- \frac{\tau}{h}u^{(j)}u_t^{(j)}+\frac{\tau}{h}u^{(j+1)}u_t^{(j)}\right]\bigg|_{X_{j+1}}.
\end{align*}  
\normalsize
On the right-hand side, terms on the second and the third line are numerical fluxes at the FD-DG interface, and terms on the fourth and the fifth line are numerical fluxes at the DG inter-element interfaces. After combining terms, we have 
\begin{align*}
\frac{1}{2}\frac{d}{dt}\left[\mc{w}_t^TH\mc{w}_t +\sum_{j=1}^{\infty} (u_{t}^{(j)}, u_{t}^{(j)})_{I_j}\right] 
=&\frac{1}{2}\frac{d}{dt}\left[ -\mc{w}^T A\mc{w} - \sum_{j=1}^{\infty}  (u_x^{(j)}, u_{x}^{(j)})_{I_j} \right.\\
&+(\mb{e}_n^T \mc{w}-u_{\Gamma}^{(1)})(\mb{d}_n^T\mc{w}+u_{x\Gamma}^{(1)})-\frac{\tau}{h}(\mb{e}_n^T\mc{w}-u_{\Gamma}^{(1)})^2\\
&\left.+\sum_{j=2}^{\infty}((u^{(j-1)}-u^{(j)})(u_x^{(j-1)}+u_x^{(j)})-\frac{\tau}{h}(u^{(j-1)}-u^{(j)})^2)|_{X_j}\right].
\end{align*}  
In the final step, we shall prove that the expression in the square bracket on the right-hand side is nonpositive with an appropriate choice of $\tau$. For this, we need additional terms $(\mb{d}_n^T\mc{w})^2$ and $(u_{x\Gamma}^{(1)})^2$, which can be obtained by using the borrowing trick in Lemma \ref{lemB}. We write 
\begin{equation}\label{wAw}
\mc{w}^T A\mc{w} = \mc{w}^T \tilde A\mc{w} + \bp h (\mb{d}_n^T\mc{w})^2.  
\end{equation}
In addition, by using the inverse inequality in Lemma \ref{lemma_inv}, we have 
\begin{equation}\label{trace}
\sum_{j=1}^{\infty}  (u_x^{(j)}, u_{x}^{(j)})_{I_j}  \geq \sum_{j=1}^{\infty}\paratr h ((u_x^{(j)}|_{X_j})^2+(u_x^{(j)}|_{X_{j+1}})^2).
\end{equation}
Combining \eqref{wAw} and \eqref{trace}, we have
\begin{align}
\begin{split}
\mc{w}^T A\mc{w} + \sum_{j=1}^{\infty}  (u_x^{(j)}, u_{x}^{(j)})_{I_j} &\geq  \mc{w}^T \tilde A\mc{w} + \bp h (\mb{d}_n^T\mc{w})^2 + \sum_{j=1}^{\infty}\paratr h ((u_x^{(j)}|_{X_j})^2+(u_x^{(j)}|_{X_{j+1}})^2)\\
& \geq  \mc{w}^T \tilde A\mc{w} +\frac{\tilde\beta h}{2}(\mb{d}_n^T \mc{w}+u_{x\Gamma}^{(1)})^2 + \frac{\paratr h}{2} \sum_{j=2}^{\infty} (u_x^{(j-1)}+u_x^{(j)})^2|_{X_j},
\end{split}
\label{wAwtrace}
\end{align}
where $\tilde\beta=\max(\beta,\paratr)$. Now, for the discrete energy $E_h$, we have  
 \begin{align*}
 E_h =\ & \mc{w}_t^TH\mc{w}_t +\sum_{j=1}^{\infty} (u_{t}^{(j)}, u_{t}^{(j)})_{I_j} + \mc{w}^T A\mc{w} + \sum_{j=1}^{\infty}  (u_x^{(j)}, u_{x}^{(j)})_{I_j} \\
&-(\mb{e}_n^T \mc{w}-u_{\Gamma}^{(1)})(\mb{d}_n^T\mc{w}+u_{x\Gamma}^{(1)})+\frac{\tau}{h}(\mb{e}_n^T\mc{w}-u_{\Gamma}^{(1)})^2\\
&+\sum_{j=2}^{\infty}(-(u^{(j-1)}-u^{(j)})(u_x^{(j-1)}+u_x^{(j)})+\frac{\tau}{h}(u^{(j-1)}-u^{(j)})^2)|_{X_j}\\
\geq &\ \mc{w}_t^TH\mc{w}_t +\sum_{j=1}^{\infty} (u_{t}^{(j)}, u_{t}^{(j)})_{I_j} + \mc{w}^T \tilde A\mc{w} \\
&+\frac{\tilde\beta h}{2}(\mb{d}_n^T \mc{w}+u_{x\Gamma}^{(1)})^2-(\mb{e}_n^T \mc{w}-u_{\Gamma}^{(1)})(\mb{d}_n^T\mc{w}+u_{x\Gamma}^{(1)})+\frac{\tau}{h}(\mb{e}_n^T\mc{w}-u_{\Gamma}^{(1)})^2\\
&+\sum_{j=2}^{\infty}\left(\frac{\tilde\beta h}{2} (u_x^{(j-1)}+u_x^{(j)})^2 - (u^{(j-1)}-u^{(j)})(u_x^{(j-1)}+u_x^{(j)})+\frac{\tau}{h}(u^{(j-1)}-u^{(j)})^2\right)\bigg|_{X_j}.
 \end{align*}
On the right-hand side, the terms on the first line are nonnegative, because $H$ is symmetric positive definite and $\tilde A$ is symmetric positive semidefinite. The terms on the second line and the third line are nonnegative if 
\[
2\sqrt{\frac{\tilde\beta h}{2}\frac{\tau}{h}}\geq 1\Rightarrow \tau\geq\frac{1}{2\tilde\beta}.
\]
As a consequence, we have the discrete energy conservation $\frac{d}{dt}E_h=0$ with $E_h\geq 0$. This completes the proof.
 \end{proof}
 
Equivalently, the semidiscretization \eqref{FD1d}-\eqref{DG1d} can be written in a matrix form
 \begin{equation}\label{FDDG1dM}
 \mc{z}_{tt} = Q\mc{z},
 \end{equation}
where $\mc{z}=[\mc{w}; \mc{u}]$. 
The vectors $\mc{w}$ and $\mc{u}$ consists of the FD solution, and the DG solution on the Lagrange nodes, respectively. The components of $\mc{u}$ can also be interpreted as the coefficients  multiplied with the Lagrange basis functions for the DG solution. The energy conservation can then be expressed as 
 \[
 \frac{d}{dt}E_h=\frac{d}{dt}\left(\mc{z}_t^T \tilde H \mc{z}_{t} - \mc{z}^T \tilde H Q \mc{z}\right)  = 0,
 \]
 where $\tilde H =\begin{bmatrix}
 H & \\
  & M
 \end{bmatrix}$, $H$ is the SBP norm, and $M$ is the DG mass matrix. The matrix $\tilde HQ$ is symmetric negative semidefinite.  For convenience, we define the energy norm $\vertiii{\mc{z}} = \sqrt{E_h}$.

\subsection{Accuracy analysis}
In this section, we derive an a priori error estimate for the semidiscretization \eqref{FD1d}-\eqref{DG1d} by using a combination of the energy method and the normal mode analysis \cite{Gustafsson2013}.  We start with the semidiscretization \eqref{FDDG1dM} and derive the corresponding error equation. Next, we separate the truncation error into three parts, the truncation error in the interior of the FD domain, in the interior of the DG domain, and at the FD-DG interface. The pointwise error due to the truncation error away from the interface is analyzed by the energy method, and the pointwise error due to the truncation error at the interface is analyzed by the normal mode analysis. In the latter, the form of \eqref{FD1d}-\eqref{DG1d} plays an important role. Thus, to make the accuracy analysis precise, we consider a particular case with SBP operators with four order interior stencil and DG local polynomials of degree three.  This choice of matching the accuracy in both discretizations is determined by the fact that when discretizing in space the wave equation in second order form by the SBP operators of accuracy $(2p,p)$, it is often the $p^{th}$ order accurate boundary closure that determines the convergence rate. Because the number of grid points with the boundary closure is independent of the mesh size, the energy estimate predicts a convergence rate $p+1/2$ that is often suboptimal. Sharper error estimates can be derived using the normal mode analysis by analyzing the precise properties of the boundary closure. This approach yields a convergence rate of $p+2$ for many problems, though there are special cases with rates lower or higher than $p+2$, see \cite{Svard2019,Wang2017,Wang2022}.  For the IPDG discretization based on local polynomials of degree $k$, the optimal convergence rate in $l^2$ norm is $k+1$.  This motivates $p+2=k+1$ in the FD-DG method, and our choice corresponds to $p=2$ and $k=3$. In this case, there are four grid points in the SBP FD boundary closure, and four DOFs in each DG element. 

Let $\bs{\ve}=[\ve_1,\ve_2,\cdots]^T$, where $\ve_j(t) = w_j(t) - U(x_j,t)$ is the pointwise error in the finite difference solution at $x_j$. On the DG side, we define $ e^{(j)}= u^{(j)} - U_h^{(j)}$ as the error in element $I_j$, where $U_h^{(j)}=\mathcal{I}U \in\mathcal{P}^k (I_j)$ is the interpolation of $U$ onto the space $\mathcal{P}^k (I_j)$. The interpolation error $U-U_h$ can be estimated by using standard approximation theory and will not be considered in the following analysis.  
When the DG weak form is realized as stencils in the analysis, we use the notation $e^{(j)}_i(t) = e^{(j)}(X_j+(i-1)h/3, t),\ i=1,2,3,4,$ as the pointwise error of the DG solution on the Lagrange nodes, and $\mb e^{(j)}=[e^{(j)}_1,e^{(j)}_2,e^{(j)}_3,e^{(j)}_4]$. By using the Taylor series expansion, we have 
\begin{equation}\label{Taylore}
\mb{e}^{(j)} = {\bar e}^{(j)} [1,1,1,1]^T+\mathcal{O}(h),
\end{equation}
where ${\bar e}^{(j)} = \frac{1}{4}\sum_{i=1}^4 {e}^{(j)}_i$. We also define $\mb{e}=[\mb{e}^{(1)}, \mb{e}^{(2)}, \cdots]^T$.

 We decompose the error as $[\bs{\ve}; \mb{e}]=\bs{\xi}+\bs{\delta}$, 
and write the error equation in two parts, 
\begin{align}
\bs{\xi}_{tt} = Q\bs{\xi}+\mb{T}_{\bs\xi},\label{xieqn} \\
 \normalsize{\bs{\delta}_{tt} = Q\bs{\delta}}+\mb{T}_{\bs\delta},\label{deltaeqn} 
 \end{align}
 where
\[
 \mb{T}_{\xi}= \begin{bmatrix}
 \mb{T}_{FD}\\
 \mb{0}\\
 \mb{T}_{DG}
 \end{bmatrix},\quad \mb{T}_{\delta}= \begin{bmatrix}
 \mb{0}\\
 \mb{T}_{\Gamma}\\
 \mb{0}
 \end{bmatrix}.
\]
 
 The first part $\bs{\xi}$ is due to the truncation error $\mb{T}_{FD}$ and $\mb{T}_{DG}$ resulted from the interior  of the FD and DG discretization, respectively. The vector $\mb{0}$ in the $\bs\xi$-equation has dimension 8-by-1. The second part  $\bs{\delta}$ is due to the truncation error $\mb{T}_{\Gamma}$ at the FD-DG interface, which involves the first four grid points on the FD side and the first element in the DG side. Thus, the length of $\mb{T}_{\Gamma}$ is 8. For convenience, we also introduce the notation $\bs\xi=[\bs\xi^-;\bs\xi^+]$ and $\bs\delta=[\bs\delta^-;\bs\delta^+]$, where the superscripts minus and plus denote the FD part and DG part, respectively. In the following, we derive error estimates for $\bs{\xi}$ and $\bs{\delta}$ separately. 
 
 \subsubsection{Error estimate of $\bs\xi$}
We have the following error estimate by the energy method. 
\begin{theorem}
The error $\bs{\xi}$ in \eqref{xieqn} satisfies 
\[
\vertiii{\bs\xi}\leq C h^4,
\]
where $C$ depends on the sixth derivative of the true solution.
\end{theorem}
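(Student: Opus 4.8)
The plan is to treat \eqref{xieqn} as the homogeneous semidiscretization \eqref{FDDG1dM} forced by the interior truncation error $\mb{T}_{\xi}$, and to run the energy method using exactly the symmetry and semidefiniteness of $\tilde H$ and $\tilde HQ$ recorded right after Theorem~\ref{ec1d}. Multiplying \eqref{xieqn} by $2\bs\xi_t^T\tilde H$ on the left and using that $\tilde H$ is symmetric and $\tilde HQ$ is symmetric, both $2\bs\xi_t^T\tilde H\bs\xi_{tt}$ and $-2\bs\xi_t^T\tilde HQ\bs\xi$ are exact time derivatives, so
\[
\frac{d}{dt}\vertiii{\bs\xi}^2=\frac{d}{dt}\left(\bs\xi_t^T\tilde H\bs\xi_t-\bs\xi^T\tilde HQ\bs\xi\right)=2\bs\xi_t^T\tilde H\,\mb{T}_{\xi}.
\]
Since $\tilde HQ$ is negative semidefinite we have $\bs\xi_t^T\tilde H\bs\xi_t\le E_h=\vertiii{\bs\xi}^2$, so the Cauchy--Schwarz inequality in the $\tilde H$ inner product gives $2\bs\xi_t^T\tilde H\mb{T}_{\xi}\le 2\vertiii{\bs\xi}\,\|\mb{T}_{\xi}\|_{\tilde H}$, where $\|\cdot\|_{\tilde H}$ is the norm induced by $\tilde H$. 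Dividing by $2\vertiii{\bs\xi}$ and integrating in time, with zero initial data for $\bs\xi$ (the scheme is initialized with the interpolated exact data, and the zero total initial error is split between $\bs\xi$ and $\bs\delta$), yields
\[
\vertiii{\bs\xi}(t)\le\int_0^t\|\mb{T}_{\xi}\|_{\tilde H}\,ds\le T\max_{[0,T]}\|\mb{T}_{\xi}\|_{\tilde H}.
\]
The whole estimate thus reduces to showing $\|\mb{T}_{\xi}\|_{\tilde H}=\mathcal{O}(h^4)$.

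By the block structure $\tilde H=\mathrm{diag}(H,M)$ and the placement of the zero block in $\mb{T}_{\xi}$, the squared norm splits as $\|\mb{T}_{\xi}\|_{\tilde H}^2=\|\mb{T}_{FD}\|_H^2+\|\mb{T}_{DG}\|_M^2$, and I would bound the two pieces separately. The finite difference part is the routine one: in the interior $D$ is the central stencil of order $2p=4$, so pointwise $\mb{T}_{FD}$ has leading term proportional to $h^4\partial_x^6U$; because $H$ scales like $h$ and the compactly supported solution activates only $\mathcal{O}(1/h)$ grid points over $[0,T]$, the weighted sum gives $\|\mb{T}_{FD}\|_H=\mathcal{O}(h^4)$ with a constant controlled by $\partial_x^6U$. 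This is what produces the sixth-derivative dependence of $C$ in the statement.

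The main obstacle is the DG part, $\|\mb{T}_{DG}\|_M=\mathcal{O}(h^4)$. Following the program sketched in the introduction, I would realize each row of the DG operator as a difference stencil on the four Lagrange nodes per element and Taylor-expand it against the smooth exact solution. The difficulty is that the \emph{pointwise} DG truncation error is only $\mathcal{O}(h^3)$ for degree $k=3$ polynomials, since differentiating the interpolation error in the volume term $(u_x,\phi_x)$ and in the inter-element flux terms costs an order; a naive bound therefore gives the suboptimal $\|\mb{T}_{DG}\|_M=\mathcal{O}(h^3)$. The crucial step is to show that the leading $\mathcal{O}(h^3)$ contribution is annihilated by the $M$-weighting: either it is orthogonal to the smooth data in the discrete inner product attached to the chosen Lagrange nodes, or the volume and flux parts of the stencil cancel at leading order across the duplicated inter-element degrees of freedom, so that only the $\mathcal{O}(h^4)$ remainder survives. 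Establishing this order-restoring cancellation in the $M$-norm is the delicate part of the argument, and once it is in hand the two bounds combine to give $\|\mb{T}_{\xi}\|_{\tilde H}=\mathcal{O}(h^4)$ and hence $\vertiii{\bs\xi}\le Ch^4$.
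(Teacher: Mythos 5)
Your energy-method setup (differentiate $\vertiii{\bs\xi}^2$, Cauchy--Schwarz, divide by $\vertiii{\bs\xi}$, integrate in time) matches the paper's, and your treatment of the FD part is fine. The gap is in the DG part, and it is not merely a missing computation: the reduction you propose cannot work. You aim to show $\|\mb{T}_{DG}\|_M=\mathcal{O}(h^4)$ so that a norm bound on the forcing suffices. But the pointwise interior DG truncation error is $\mathcal{O}(h^2)$ (not $\mathcal{O}(h^3)$ as you state; see \eqref{T_DG_loc}), and since $M^{loc}\sim h\,M_0$ with $M_0$ a fixed positive definite matrix and the leading coefficient vector of $\mb{T}_{DG}^{loc}$ is nonzero, one has $(\mb{T}_{DG}^{loc})^TM^{loc}\mb{T}_{DG}^{loc}\sim h^5$ per element and hence $\|\mb{T}_{DG}\|_M\sim h^2$ after summing over $\mathcal{O}(h^{-1})$ elements. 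No weighting by $M$ alone annihilates the leading term in the norm; this is exactly why the paper notes that a direct application of Cauchy--Schwarz to the DG term in \eqref{xisqu_est} yields only a suboptimal $h^2$ estimate.

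The order-restoring mechanism lives in the \emph{pairing} $(\bs\xi_t^{+(j)})^TM^{loc}\mb{T}_{DG}^{loc}$, not in $\|\mb{T}_{DG}\|_M$. One must use that the error derivative is constant within each element up to $\mathcal{O}(h)$, i.e.\ $\bs\xi_t^{+(j)}=(\bar{\bs\xi}^{+(j)})_t[1,1,1,1]^T+\mathcal{O}(h)$ as in \eqref{Taylore}, and then observe that contracting the leading term of $M^{loc}\mb{T}_{DG}^{loc}$ (computed in \eqref{DGtruncationM}) with $[1,1,1,1]^T$ produces the signed combination $U_{xxxx}(X_j)-U_{xxxx}(X_j+h/3)-U_{xxxx}(X_j+2h/3)+U_{xxxx}(X_j+h)$, which Taylor-expands to $\mathcal{O}(h^2)$, while the next term has coefficients summing to zero and gains one order. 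This lifts the per-element contribution from $h^3$ to $h^5$ and gives the $h^4$ bound after summation; it also shows the sixth derivative enters through the DG part, not only the FD part. Your proposal names the right phenomenon (an orthogonality-type cancellation) but locates it in the wrong object, and as stated the final step would fail.
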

\begin{proof}
 On the FD side, the standard fourth order centred stencil $D_+ D_- - \frac{1}{12}(D_+D_-)^2$ is used on grid points $j=5,6,\cdots$.  By using the Taylor series expansion, every component of $\mb{T}_{FD}$ is $\mathcal{O}(h^4)$. In the interior of the DG discretization from element $i$ with $i=2,3,\cdots$, the stencils in $Q$ have a repeated block structure. In matrix form, each block can be written as a 4-by-12 matrix,
\setcounter{MaxMatrixCols}{20}
\[
\small
\frac{1}{h^2}\begin{bmatrix}
 8 &  -36&    72&     296&    -368&  36& -18&      80&     -59&   -18&    9&     -2\\
-34/27& 17/3& -34/3& -943/27& 1576/27& -32&  20& -622/27&  412/27&  16/3& -8/3&  16/27\\
16/27& -8/3&  16/3&  412/27& -622/27&  20& -32& 1576/27& -943/27& -34/3& 17/3& -34/27\\
 -2&    9&   -18&     -59&      80& -18& 36&    -368&     296&    72&  -36&      8
\end{bmatrix}.
\] 
Each row corresponds to a Lagrange node in an element with a 12-point stencil. We compute the truncation error of these stencils by using Taylor series expansion, 
\begin{equation}\label{T_DG_loc}
\mb{T}_{DG}^{loc} = 
\begin{bmatrix}
 \frac{1}{108}U_{xxxx}(X_j,t)\\
  -\frac{1}{324}U_{xxxx}(X_j+h/3,t)\\
   -\frac{1}{324}U_{xxxx}(X_j+2h/3,t)\\
   \frac{1}{108}U_{xxxx}(X_j+h,t) 
   \end{bmatrix}h^2+\begin{bmatrix}
- \frac{2}{27}U_{xxxx}(X_j,t)\\
  \frac{11}{729}U_{xxxx}(X_j+h/3,t)\\
   -\frac{11}{729}U_{xxxx}(X_j+2h/3,t)\\
   \frac{2}{27}U_{xxxx}(X_j+h,t) 
   \end{bmatrix}h^3+\mathcal{O}(h^4).
\end{equation}
Unlike the interior FD stencil, the interior DG stencil is only second order accurate. At first glance, this does not lead to a fourth order convergence rate. However, when multiplying with the local mass matrix, we have 
\footnotesize
\begin{equation}\label{DGtruncationM}
\begin{split}
&M^{loc}\mb{T}_{DG}^{loc}\\
 = &h 
\begin{bmatrix}
8/105 &        33/560&         -3/140 &        19/1680  \\
      33/560   &      27/70      &   -27/560&         -3/140\\   
      -3/140    &    -27/560     &    27/70    &      33/560  \\ 
      19/1680 &       -3/140    &     33/560    &      8/105   
\end{bmatrix}
\left(
\begin{bmatrix}
 \frac{1}{108}U_{xxxx}(X_j,t)\\
  -\frac{1}{324}U_{xxxx}(X_j+h/3,t)\\
   -\frac{1}{324}U_{xxxx}(X_j+2h/3,t)\\
   \frac{1}{108}U_{xxxx}(X_j+h,t) 
   \end{bmatrix}h^2+\begin{bmatrix}
- \frac{2}{27}U_{xxxxx}(X_j,t)\\
  \frac{11}{729}U_{xxxxx}(X_j+h/3,t)\\
   -\frac{11}{729}U_{xxxxx}(X_j+2h/3,t)\\
   \frac{2}{27}U_{xxxxx}(X_j+h,t) 
   \end{bmatrix}h^3+\mathcal{O}(h^4)\right)\\
   =&
   \begin{bmatrix}
 U_{xxxx}(X_j,t)\\
  -U_{xxxx}(X_j+h/3,t)\\
   -U_{xxxx}(X_j+2h/3,t)\\
   U_{xxxx}(X_j+h,t) 
   \end{bmatrix}\frac{h^3}{1440}+\begin{bmatrix}
-\frac{163}{45360} U_{xxxxx}(X_j,t)\\
  \frac{1}{1680}U_{xxxxx}(X_j+h/3,t)\\
   -\frac{1}{1680}U_{xxxxx}(X_j+2h/3,t)\\
 \frac{163}{45360}  U_{xxxxx}(X_j+h,t) 
   \end{bmatrix}h^4+\mathcal{O}(h^5).
\end{split}
\end{equation}
\normalsize
As will be shown, the coefficients in the above expression  lead to a cancellation of error terms, which is key to obtain optimal convergence rate. To see this, we multiply  \eqref{xieqn} by $\bs\xi_t^T \tilde H$ to obtain 
\begin{equation}\label{xisqu_est}
\vertiii{\bs\xi}^2 = 2\bs{\xi}_t \tilde H \mb{T}_{\bs\xi}=2(\bs\xi^-_t)^T H \tilde{\mb{T}}_{FD}+2(\bs\xi^+_t)^T M \tilde{\mb{T}}_{DG},
\end{equation}
 where $\tilde{\mb{T}}_{FD}$ is ${\mb{T}}_{FD}$ appended with 4 zeros at the end, and $\tilde{\mb{T}}_{DG}$ is ${\mb{T}}_{DG}$ appended with 4 zeros in the beginning. By using the Cauchy-Schwarz inequality, we bound the first term in \eqref{xisqu_est} by
 \begin{equation}\label{ve_est}
 2(\bs\xi^-_t)^T H \tilde{\mb{T}}_{FD}\leq 2\|\bs\xi^-_t\|_H\|\tilde{\mb{T}}_{FD}\|_H\leq Ch^4 \|\bs\xi^-_t\|_H.
  \end{equation}
  Because of the second order truncation error \eqref{T_DG_loc}, a direct application of the  Cauchy-Schwarz inequality to the second term in  \eqref{xisqu_est} results in a suboptimal estimate $\sim h^2$. To obtain an optimal estimate $\sim h^4$, we consider $2(\bs\xi^+_t)^T M \tilde{\mb{T}}_{DG}$ within one element $I_j$, that is $(\bs\xi^{+(j)}_t)^T M^{loc} \mb{T}_{DG}^{loc}$. Taking the two leading order terms, we have 
   \begin{equation}\label{e_est_temp}
  \begin{split}
 & | (\bs\xi^{+(j)}_t)^T M^{loc} \mb{T}_{DG}^{loc} | \leq  C h^3|(\bar{\bs\xi}^{+(j)})_t|(|T_1|+h|T_2|). 
  \end{split}
     \end{equation}
 where 
  \small
   \begin{equation*}
  \begin{split}
  T_1 &= U_{xxxx}(X_j,t)-U_{xxxx}(X_j+h/3,t)-U_{xxxx}(X_j+2h/3,t)+U_{xxxx}(X_j+h,t),\\
  T_2&=-\frac{163}{45360}U_{xxxxx}(X_j,t)+\frac{1}{1680}U_{xxxxx}(X_j+h/3,t)-\frac{1}{1680}U_{xxxxx}(X_j+2h/3,t)+\frac{163}{45360}U_{xxxxx}(X_j+h,t).
    \end{split}
     \end{equation*}
  \normalsize  
    In the above, by an analogue of \eqref{Taylore}, we have used  
    \[
    \bs\xi^{+(j)}_t=[(\bs\xi^{+(j)}_1)_t,(\bs\xi^{+(j)}_2)_t,(\bs\xi^{+(j)}_3)_t,(\bs\xi^{+(j)}_4)_t]^T=(\bar{\bs\xi}^{+(j)})_t [1,1,1,1]^T+\mathcal{O}(h), 
\]
where $(\bar{\bs\xi}^{+(j)})_t = \frac{1}{4}\sum_{i=1}^4 (\bs\xi^{+(j)}_i)_t$. Substituting the Taylor series expansion 
 \begin{align*}
 U_{xxxx}(X_j+h/3,t) &=  U_{xxxx}(X_j,t)+\frac{h}{3}U_{xxxxx}(X_j,t)+\frac{h^2}{9}U_{xxxxxx}(X_j,t)+\mathcal{O}(h^3),\\
 U_{xxxx}(X_j+2h/3,t) &=  U_{xxxx}(X_j,t)+\frac{2h}{3}U_{xxxxx}(X_j,t)+\frac{4h^2}{9}U_{xxxxxx}(X_j,t)+\mathcal{O}(h^3),\\
 U_{xxxx}(X_j+h,t) &=  U_{xxxx}(X_j,t)+hU_{xxxxx}(X_j,t)+h^2U_{xxxxxx}(X_j,t)+\mathcal{O}(h^3),
 \end{align*}
to $T_1$, we find that the first two terms in the expansions cancel, and  obtain $|T_1|\leq C_1 h^2|U_{xxxxxx}(X_j,t)|$. Similarly, since the sum of the coefficients in $T_2$ is zero, the first term in the Taylor expansions of $U_{xxxxx}$ in $T_2$ cancels, and leads to $|T_2|\leq C_2 h|U_{xxxxxx}(X_j,t)|$. Consequently, the estimate \eqref{e_est_temp} becomes
  \begin{equation*}
  | (\bs\xi^{+(j)}_t)^T  M^{loc} \mb{T}_{DG}^{loc} | \leq C h^5|(\bar{\bs\xi}^{+(j)})_t| |U_{xxxxxx}(X_j,t)|\leq C h^5\sqrt{\sum_{i=1}^4 ((\bs\xi^{+(j)}_i)_t)^2} |U_{xxxxxx}(X_j,t)|. 
  \end{equation*}
 Summing the contribution from all DG elements leads to 
 \begin{equation}\label{e_est}
 2(\bs\xi^{+}_t)^T  M \tilde{\mb{T}}_{DG}\leq Ch^4 \|\bs\xi^{+}_t \|,
 \end{equation}
where we have included the dependence of $U_{xxxxxx}$ into $C$. Finally, we combine \eqref{xisqu_est}, \eqref{ve_est} and \eqref{e_est} to obtain 
 \[
 \vertiii{\bs\xi}^2\leq Ch^4(\|\bs\xi^{-}_t \|_H+\|\bs\xi^{+}_t\|)\leq Ch^4\vertiii{\bs\xi}.
 \]
 Dividing $\vertiii{\bs\xi}$ on both sides completes the proof. 
\end{proof}
A common approach of deriving error estimates for DG discretizations are based on the weak form using the Galerkin orthogonality and special projection operators at the inter-element interfaces, see \cite{Grote2006,Hesthaven2008}. In the above, we have taken a different approach by estimating the errors in the coefficients of the DG basis functions and obtained expected convergence results. In this way, the accuracy analysis is performed in the same framework for both the FD and DG discretizations. 

\subsubsection{Error estimate of $\bs\delta$}
At the FD-DG interface, the first four grid points on the FD side and the first element on the DG side are affected by the interface closure. As a consequence, the interface stencils can be written as the following 8-by-14 matrix, 
\begin{equation*}
\footnotesize{
\frac{1}{h^2}
\begingroup
\setlength\arraycolsep{1.5pt}
\begin{bmatrix}
-4/49  &        64/49  &      -118/49    &      59/49    &       0       &      -9/49     &      8/49     &      0     &      0    &          0    &          0    &          0     &         0 &             0 \\      
       0  &           -4/43   &       59/43   &     -110/43   &       59/43   &       32/43   &      -36/43    &       0     &         0      &        0       &       0       &       0     &         0      &        0\\       
       0    &          0     &         0      &        1     &        -2       &     -13/59     &     72/59     &      0     &         0     &         0      &        0      &        0    &          0      &        0   &\\    
       0     &         0&             -9/17      &    32/17    &     -13/17    &   -1166/17   &     1024/17     &    216/17    &    -108/17    &      24/17    &       0    &          0     &         0    &          0\\       
       0     &         0     &         8/3     &     -12      &       24       &     976/3     &    -368      &       36      &      -18      &       80     &       -59      &      -18       &       9      &       -2    \\   
       0     &         0     &       -34/81     &     17/9     &     -34/9   &     -3203/81     &   1576/27    &     -32     &        20   &        -622/27   &      412/27    &      16/3   &        -8/3  &         16/27  \\  
       0     &         0     &        16/81      &    -8/9      &     16/9     &    1412/81    &    -622/27    &      20     &       -32      &     1576/27     &   -943/27      &   -34/3    &       17/3   &       -34/27  \\  
       0     &         0    &         -2/3    &        3     &        -6     &      -199/3     &      80      &      -18       &      36     &      -368      &      296     &        72    &        -36     &         8       
\end{bmatrix}.
\endgroup
}
\end{equation*}
The eight rows correspond to grid points $x_4, x_3, x_2, x_1$ on the FD side, and $X_1, X_1+h/3, X_1+2h/3, X_1+h$ on the DG side. By using the Taylor series expansion, we compute the truncation errors on these eight  points and obtain 
\begin{equation}\label{TGamma}
\mb{T}_{\Gamma} = \begin{bmatrix}
-11/588,
      -5/516, 
       1/12,
    -337/612,
     209/108,
    -893/2916,
     407/2916,
     -17/36    
\end{bmatrix}h^2 U_{xxxx}|_{\Gamma}.
\end{equation}
Here, the truncation error is only second order, but it is important to note that the length of $\mb{T}_{\Gamma}$ is always eight independent of $h$. A straightforward application of the energy method to the error equation leads to a convergence rate 2.5 in the energy norm. In the following, we derive a sharp estimate for $\bs{\delta}$ by the normal mode analysis \cite{Gustafsson2013}, which has also been used for deriving error estimates for the FD discretization \cite{Wang2017}.

\begin{theorem}
The error \eqref{deltaeqn} satisfies the error estimate 
\begin{equation}\label{delta_EST}
\sqrt{\int_{0}^T \|\bs\delta\|^2_h dt}\leq Ch^4,
\end{equation}
where $C$ depends on the final time $T$ and the fourth derivative of the true solution at the interface. 
\end{theorem}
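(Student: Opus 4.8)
The plan is to study the error equation \eqref{deltaeqn} in Laplace space, following the normal mode framework of \cite{Gustafsson2013,Wang2017}. Because the forcing is separable, $\mb{T}_\Gamma = \mb{c}\,h^2 U_{xxxx}|_\Gamma(t)$ with $\mb{c}$ the fixed eight-vector of coefficients from \eqref{TGamma}, I would first Laplace transform in time to obtain the resolvent equation $s^2\widehat{\bs\delta} = Q\widehat{\bs\delta} + \mb{c}\,h^2\,\widehat{U_{xxxx}|_\Gamma}(s)$ for $\operatorname{Re}(s)>0$, and introduce the scaled frequency $\tilde s = sh$. Away from the interface, both the FD and DG parts of $s^2\widehat{\bs\delta}=Q\widehat{\bs\delta}$ are constant-coefficient difference equations, so I would substitute the ansatz $\widehat\delta_j = \kappa^j$, read off the characteristic roots from the symbols of the interior stencils in $Q$, and retain only the roots with $|\kappa|<1$ (the spatially decaying, admissible modes on each side). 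The bounded solution is then a linear combination of these modes with unknown amplitudes $\mb{a}$.

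Next I would impose the eight equations encoded in the $8\times 14$ interface matrix together with the matching conditions to the interior modes. This produces a small linear system $E(\tilde s)\,\mb{a} = h^2\,\mb{c}\,\widehat{U_{xxxx}|_\Gamma}(s)$, whose solution determines $\widehat{\bs\delta}$ entirely. The crux of the argument is then a resolvent estimate for $\|\widehat{\bs\delta}(s)\|_h$. I would first establish the Kreiss/determinant condition, namely that $E(\tilde s)$ is nonsingular with a uniformly bounded inverse for all $\operatorname{Re}(\tilde s)\geq 0$; for $\operatorname{Re}(s)>0$ this is consistent with the energy estimate of Theorem \ref{ec1d}, which rules out growing normal modes, and the delicate part is the closure of the estimate on the boundary $\operatorname{Re}(\tilde s)=0$, in particular as $\tilde s\to 0$.

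Reconstructing $\widehat{\bs\delta}$, the geometric summation of a decaying mode contributes $h\sum_j|\kappa|^{2j}=h/(1-|\kappa|^2)$ to $\|\widehat{\bs\delta}\|_h^2$; for the slowly varying mode the characteristic relation gives $\kappa\approx 1-\tilde s$, so this factor is $\approx 1/(2\operatorname{Re}(s))$, which is the mechanism that converts a resolvent bound into a time-integrated estimate through Parseval's relation along a line $\operatorname{Re}(s)=\eta\sim 1/T$. A crude bound $|\mb{a}|\lesssim h^2|\widehat{U_{xxxx}|_\Gamma}|$ yields, through Parseval, only a suboptimal rate. To reach the optimal $h^4$, I would show that the projection of the forcing $\mb{c}$ onto the dangerous slow mode is higher order in $\tilde s$, so that the relevant amplitude carries two additional powers of $\tilde s = sh$; the accompanying powers of $s$ are harmless after Parseval because $s^2\,\widehat{U_{xxxx}|_\Gamma}$ corresponds, via the equation, to bounded derivatives of the interface data. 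Finally, Parseval's relation turns the resolvent bound into \eqref{delta_EST}, after bounding $\int_0^\infty e^{-2\eta t}|U_{xxxx}|_\Gamma|^2\,dt$ by the fixed-time data and letting $\eta\sim 1/T$.

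The main obstacle is precisely this extraction of the extra orders of $h$ in the low-frequency regime. As $\tilde s\to 0$ the characteristic equation develops a double root $\kappa\to 1$ and $E(\tilde s)$ degenerates, so a careful perturbation expansion of the roots, the mode vectors, and $E(\tilde s)^{-1}$ in powers of $\tilde s$ is required. The optimal rate hinges on a cancellation: the specific coefficients of $\mb{c}$ in \eqref{TGamma} must combine so that their coupling to the slow mode vanishes to leading order, in the same spirit as the vanishing sums of coefficients exploited in the estimate of $\bs\xi$. Verifying this cancellation and tracking its order through the inverse of $E(\tilde s)$ is the delicate computation on which the whole estimate rests.
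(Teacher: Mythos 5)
Your plan is essentially the paper's proof: Laplace transform, normal-mode decomposition into admissible roots on the FD and DG sides, a small linear system for the amplitudes driven by $h^2\hat{\mathbf{T}}_{\Gamma}$, the observation that the boundary matrix degenerates as $\tilde s\to 0$, a cancellation tied to the specific coefficients in \eqref{TGamma}, the bound $1/(1-|\kappa_1|^2)\leq 1/(2\eta h)$ for the slow modes, and Parseval. The one step you handle differently is the resolution of the low-frequency degeneracy: you propose to show that the forcing's coupling to the slow mode is higher order in $\tilde s$ and then trade the resulting powers of $s$ for time derivatives of the interface data through Parseval, whereas the paper shows directly (via an SVD of $C(0)$) that $\hat{\mathbf{T}}_{\Gamma}$ lies in the column space of the singular matrix $C(0)$ and invokes Lemma 3.4 of \cite{Nissen2012} to get the uniform bound $|\mathbf{Z}|\leq Ch^2|\hat{\mathbf{T}}_{\Gamma}|\leq Ch^4$, with no loss and no need to differentiate the data. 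Note also that with the correct bookkeeping a uniformly bounded inverse already yields the optimal $h^4$ (the right-hand side of \eqref{BS} is $h^2\hat{\mathbf{T}}_{\Gamma}\sim h^4$); the only thing the cancellation must buy is the removal of the $1/\tilde s$ singularity of $C(\tilde s)^{-1}$, i.e.\ an $O(\tilde s)$ projection onto the left null vector suffices, not two extra powers. The decisive computation you defer --- verifying that the eight coefficients in \eqref{TGamma} are orthogonal to that null direction --- is exactly the content of the paper's explicit SVD check, so your outline is sound but incomplete without it.
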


\begin{proof}
On the FD side, the error equation in the interior $j=5,6,\cdots$ takes the form 
\[
(\delta^-_j)_{tt} = (D_+ D_- - \frac{1}{12}(D_+D_-)^2)\delta^-_j,
\]
where $D_+$ and $D_-$ are the standard forward and backward difference operators, respectively. Next, we perform a Laplace transform in time $t$, and obtain
\[
s^2\hat \delta^-_j = (D_+ D_- - \frac{1}{12}(D_+D_-)^2)\hat \delta^-_j,
\]
 where $s$ is the time dual, and the hat-variables are in Laplace space. The general solution to the above equation is 
\begin{equation}\label{gelFD}
\hat \delta^-_j = \sigma_1 \kappa_1^{j-3}+\sigma_2 \kappa_2^{j-3},\ j=3,4,5,\cdots,
\end{equation}
where $\kappa_1 = 1-\tilde s$ and $\kappa_2= 7-4\sqrt{3}+\mathcal{O}(\tilde s^2)$ are the two admissible solutions to the corresponding characteristic equation. The two unknown coefficients $\sigma_1$, $\sigma_2$, and the two pointwise errors $\hat\delta^-_1$, $\hat\delta^-_2$ will be determined by the numerical scheme at the interface.

The DG discretization \eqref{DG1d} can be written in a matrix form 
\begin{equation}\label{M2}
\mathbf{u}^{(j)}_{tt} = D_1\mathbf{u}^{(j-1)}+ D_2\mathbf{u}^{(j)} +D_3\mathbf{u}^{(j+1)}\approx U_{xx}(\mathbf{X}^{(j)},t).
\end{equation}
Here, $\mathbf{u}^{(j)}=[u^{(j)}_1,u^{(j)}_2,u^{(j)}_{3},u^{(j)}_{4}]^T$ consists of the unknown coefficients for the DG solution in $I_j$. The matrices $D_i=(M^{loc})^{-1}A_i,\ i=1,2,3$, where the $4$-by-$4$ matrix $M^{loc}$ is the local mass matrix, and is symmetric positive definite. The vector $\mathbf{X}^{(j)}=[X_j, X_j+h/3, X_j+2h/3, X_{j+1}]^T$ consists of the Lagrange nodes in $I_j$.

Here, the matrices $D_1,D_2,D_3$ are realized as difference stencils. The interior error equation on the DG side is 
\begin{equation}\label{DGerrInt}
\bs{\delta}^{+(j)}_{tt} = D_1\bs{\delta}^{+(j-1)}+ D_2\bs{\delta}^{+(j)} +D_3\bs{\delta}^{+(j+1)},\quad j=2,3,\cdots. 
\end{equation}
The Laplace transform of \eqref{DGerrInt} in time is 
\begin{equation}\label{DGerrIntL}
\tilde s^2 \boldsymbol{\hat \delta}^{+(j)} = \tilde D_1\boldsymbol{\hat  \delta}^{+(j-1)}+ \tilde D_2\boldsymbol{\hat  \delta}^{+(j)} +\tilde D_3\boldsymbol{\hat  \delta}^{+(j+1)},\quad j=2,3,\cdots,
\end{equation}
where $\tilde s=sh$, and $\tilde D_i = h^2 D_i$, $i=1,2,3$. We substitute the ansatz $\boldsymbol{\hat \delta}^{+(j)}=\alpha^{j-1}\mathbf{z},\ j=1,2,\cdots,$ in \eqref{DGerrIntL}, and obtain 
\begin{equation}\label{ls0}
\tilde s^2 \alpha^{j-1}\mathbf{z} = \tilde D_1 \alpha^{j-2}\mathbf{z}+\tilde D_2\alpha^{j-1}\mathbf{z}+\tilde D_3 \alpha^{j}\mathbf{z},\quad j=2,3,\cdots.  
\end{equation}
If $\alpha\neq 0$, we have 
\begin{equation}\label{ls}
(\tilde D_1 + (\tilde D_2-\tilde s^2 I)\alpha+\tilde D_3 \alpha^2)\mathbf{z} =0.
\end{equation}
A solution exists if $\det(\tilde D_1 + (\tilde D_2-\tilde s^2 I)\alpha+\tilde D_3 \alpha^2)=0$. By a direct calculation, we find that the determinant is a sixth order polynomial in $\alpha$. At $\tilde s=0$, the six roots are 0,0,0.1390,1,1,7.1943. A perturbation analysis with $\tilde s>0$ shows that there are two nonzero admissible roots $\alpha_1=0.1390-4.3780\times 10^{-4}\tilde s^2$, $\alpha_2 = 1-\tilde s$.  For each admissible root, we compute the corresponding eigenvector by \eqref{ls},
\begin{align*}
 \mathbf{z}_1 = \begin{bmatrix}
-7.1943\\
   2.7016\\
  -0.6368\\
   1.0000
   \end{bmatrix}-\begin{bmatrix}
   0.0227\\
   0.0821\\
   0.0480\\
   0
   \end{bmatrix}\tilde s^2,\quad \mathbf{z}_2 = \begin{bmatrix}
1\\
  1\\
  1\\
   1
   \end{bmatrix}+\begin{bmatrix}
   1\\
   2/3\\
   1/3\\
   0
   \end{bmatrix}\tilde s.
\end{align*}
Next, we consider the case when $\alpha=0$. The relation \eqref{ls0} is reduced to $\tilde D_1 \mathbf{z}=0$ for $j=2$, which has two solutions 
\begin{align*}
 \mathbf{z}_3 = \begin{bmatrix}
9/2\\
   1\\
  0\\
   0
   \end{bmatrix},\quad\mathbf{z}_4 = \begin{bmatrix}
   -9\\
   0\\
   1\\
   0
   \end{bmatrix},
   \end{align*}
   corresponding to $\alpha_3=\alpha_4=0$. The general solution to \eqref{DGerrIntL} can be written as 
\begin{equation}\label{gelDG}
\boldsymbol{\hat \delta}^{+(j)}=c_1\alpha^{j-1}_1\mathbf{z}_1+c_2\alpha^{j-1}_2\mathbf{z}_2+c_3\alpha^{j-1}_3\mathbf{z}_3+c_4\alpha^{j-1}_4\mathbf{z}_4j,\quad j = 1,2,\cdots,
\end{equation}
where $c_i,\ i=1,2,3,4$ are the unknown coefficients. Note that $\mathbf{z}_3$ and $\mathbf{z}_4$ only have contribution when $j=1$.

We use the general solutions \eqref{gelFD} and \eqref{gelDG} in the interface stencils to obtain the set of eight error equations for the FD-DG interface, 
\begin{equation}\label{BS}
C(\tilde s) \mathbf{Z}=h^2 \hat{\mathbf{T}}_{\Gamma},
\end{equation}
where $\mathbf{Z} =[\sigma_1,\sigma_2,\hat\delta^-_1,\hat \delta^-_2,c_1,c_2,c_3,c_4]^T$ and $\hat{\mathbf{T}}_{\Gamma}\sim h^2$ is the Laplace transform of ${\mathbf{T}}_{\Gamma}$  in \eqref{TGamma}. If $C(\tilde s)$ is invertible for all $Re(\tilde s)\geq 0$, then the determinant condition is satisfied \cite{Gustafsson2013} and $|\mathbf{Z}|\sim h^4$ follows. Otherwise, the behaviour of $C(\tilde s)$ in the vicinity of $\tilde s= 0$ shall be analyzed.

We substitute $\tilde s=0$ into the matrix $C(\tilde s)$ and find that $C(0)$ is singular with one eigenvalue equal to zero. To this end, we consider $C(\tilde s)=C(0)+\tilde sC'(0)+\mathcal{O}(\tilde s^2)$. Let $\mathcal{U}\Sigma\mathcal{V}^T$ be the singular value decomposition of $C(0)$. By direct computation, we have $\mathcal{U}^T \hat{\mathbf{T}}_{\Gamma} =[-2.0449,-0.1253,-0.3378,-0.2173$,\\
$0.1352,-0.1218,-0.0082,0]^T$. It is important to note that the last component is equal to zero, which means that $ \hat{\mathbf{T}}_{\Gamma}$ is in the column space of $C(0)$. Consequently, by Lemma 3.4 from \cite{Nissen2012}, the solution $ \mathbf{Z} $ can be bounded as 
\begin{equation}\label{Z_est}
|\mathbf{Z}|\leq Ch^2|\hat{\mathbf{T}}_{\Gamma}|\leq Ch^4,
\end{equation}
 for some constant $C$.  

In the last step, we sum all contributions from \eqref{gelFD} and \eqref{gelDG}. For the error in the FD discretization, we have
\begin{align}\label{delta_m_temp}
\begin{split}
\|\hat{\bs\delta}^-\|_h^2 &= h(|\hat\delta^-_1|^2+|\hat\delta^-_2|^2)+ h\sum_{j=3}^{\infty}|\hat\delta^-_j|^2\\
&=h(|\hat\delta^-_1|^2+|\hat\delta^-_2|^2)+ h\sum_{j=3}^{\infty}|\sigma_1\kappa_1^{j-3}+\sigma_2\kappa_2^{j-3}|^2\\
&\leq h(|\hat\delta^-_1|^2+|\hat\delta^-_2|^2)+ h|\sigma_1|^2\frac{1}{1-|\kappa_1|^2}+h|\sigma_2|^2\frac{1}{1-|\kappa_2|^2}.
\end{split}
\end{align}
For the three terms on the right-hand side, the first and third term  can easily be bounded by using \eqref{Z_est}. More precisely, because of $|\hat\delta_1^-|,|\hat\delta_2^-|,|\sigma_2|\leq Ch^4$, we have  
\[
h(|\hat\delta^-_1|^2+|\hat\delta^-_2|^2)\leq Ch^9, \quad h|\sigma_2|^2\frac{1}{1-|\kappa_2|^2}\leq  Ch^9.
\]
The second term in \eqref{delta_m_temp} contains the slowly-decaying component $\kappa_1=1-\tilde s$. To bound this term, we use Lemma 2 in \cite{Wang2017}, which states
\begin{equation}\label{kappa1_est}
\frac{1}{1-|\kappa_1|^2}\leq \frac{1}{2\eta h},
\end{equation}
where $\eta = Re(s)>0$ is a constant independent of $h$. Consequently, the second term in \eqref{delta_m_temp} is bounded as 
\[
 h|\sigma_1|^2\frac{1}{1-|\kappa_1|^2} \leq C h^8,
 \]
and we have 
\begin{equation}\label{delta_m}
\|\hat{\bs\delta}^-\|_h^2\leq Ch^8. 
\end{equation}

Next, we consider the error in the DG discretization. We have
\begin{align}\label{delta_p_temp}
\begin{split}
\|\hat{\bs\delta}^+\|_h^2 &= \frac{h}{4}\sum_{j=1}^{\infty} \left|\sum_{i=1}^4 c_i\alpha_i^{j-1}\mb{z}_i\right|^2\leq Ch \sum_{i=1}^4 |c_i|^2 |\mb{z}_i|^2\left|\frac{1}{1-|\alpha_i|^2}\right|.
\end{split}
\end{align}
To bound the right-hand side, we use $|c_i|\leq Ch^4,\ i=1,2,3,4$ from \eqref{Z_est}. For the terms with $\alpha_i$, they can be bounded independent of $h$ for $i=1,3,4$. For $\alpha_2=1-\tilde s$, we use again Lemma 2 from \cite{Wang2017} to obtain $\frac{1}{1-|\alpha_2|^2}\leq \frac{1}{2\eta h}$. Since $|\mb{z}_i|,\ i=1,2,3,4$ are independent of $h$, we have 
\begin{align}\label{delta_p}
\|\hat{\bs\delta}^+\|_h^2 \leq Ch^8.
\end{align}
Combining the two estimates \eqref{delta_m} and \eqref{delta_p}, we obtain the estimate for $\bs\delta$ in Laplace space, $\|\hat{\bs\delta}\|_h^2 \leq Ch^8$. By using Parseval's relation and the argument   \textit{future cannot affect past} \cite{Gustafsson2013}, we obtain the final estimate \eqref{delta_EST} in physical space.  
\end{proof}

\section{Numerical treatment at the FD-DG interface in 2D}\label{sec_FD_DG_2D}
In this section, we present an FD-DG discretization for the wave equation in two space dimension. Here, the FD-DG interface is a line segment, where the FD solution is pointwise and the DG solution is a piecewise polynomial. In addition, the DOFs from the FD and DG sides may not coincide. As a consequence, interpolation or projection is needed for coupling the FD and DG solutions, which shall not destroy the stability and accuracy property. 

Our model problem is the wave equation 
\begin{align}\label{wave2d}
U_{tt} &= \nabla \cdot b\nabla U,\quad (x,y)\in\Omega,\ t\in (0,T],
\end{align}
with suitable initial and boundary conditions. The spatial domain consists of two subdomains  $\Omega=\Omega_1\cup\Omega_2$, where $\Omega_1=[0,1]\times [0,1]$ and $\Omega_2=[0,1]\times [-1,0]$ with an   interface $\Gamma=\Omega_1\cap\Omega_2$. The material parameter $b$ is piecewise constant,
\begin{equation*}
b = \begin{cases}
b_1, \quad \text{in}\ \Omega_1, \\
b_2, \quad \text{in}\ \Omega_2, 
\end{cases}
\end{equation*}
where $b_1\neq b_2$ are positive constants. We consider interface conditions that prescribe continuity of pressure and continuity of normal flux,
\begin{align}
U(x,0^-,t)&=U(x,0^+,t),\label{inter1}\\
b_1U_y(x,0^-,t)&=b_2U_y(x,0^+,t).\label{inter2}
\end{align}
We assume that the initial and boundary data are sufficiently smooth and compatible in each subdomain.

We discretize \eqref{wave2d} in space by the SBP operators in $\Omega_1$ and the IPDG in $\Omega_2$, and impose the interface conditions \eqref{inter1}-\eqref{inter2} weakly. On the interface $\Gamma$, the FD solution is pointwise and the DG solution is a piecewise polynomial. For the numerical fluxes on the FD side, we need to interpret the DG solution pointwise; while on the DG side, we need the FD solution in the form of piecewise polynomial. This poses a significant challenge in designing numerical fluxes. To overcome this challenge, we construct projection operators for the FD and DG solutions on the interface. 
In Section \ref{sec_po}, we present the required properties of the projection operators for energy stability and outline the main procedure of constructing these operators. After that, we derive numerical fluxes and prove energy stability in Section \ref{sec_es}.  Finally, in Section \ref{sec_ta} we analyze the truncation error of the numerical interface scheme and identify demands for obtaining optimal convergence. By optimal convergence, we mean that when matching the order of accuracy of the FD and DG discretization, the overall convergence rate is the same as when one method is used in the entire domain. As an example, using the SBP operator with order of accuracy (4,2) in $\Omega_1$ and the IPDG with local polynomials of degree three, the optimal convergence rate for the overall semidiscretization is fourth order. 

\subsection{Projection operators}\label{sec_po}
The projection operators used in this work are inspired by the norm compatible projection operators of Kozdon and Wilcox \cite{Kozdon2016}. For energy stability, we impose the same type of constraints on the projection operators as in \cite{Kozdon2016}. However, the accuracy constraints are different. In \cite{Kozdon2016}, with the model problem of the wave equation in the first order form, it is enough to require the projection operators to mimic the accuracy of the SBP FD stencils. For the wave equation in the second order form, however, the same approach leads to a suboptimal convergence rate \cite{Wang2016} at an FD-FD interface. The optimal convergence rate is recovered by using two pairs of projection operators with improved accuracy property and carefully designed numerical fluxes \cite{Almquist2019}. Similarly, we impose this type of accuracy constraints on the projection operators. 

\begin{figure}
\centering
\includegraphics[trim={0cm 0cm 0cm 0cm},clip,width=0.7\textwidth]{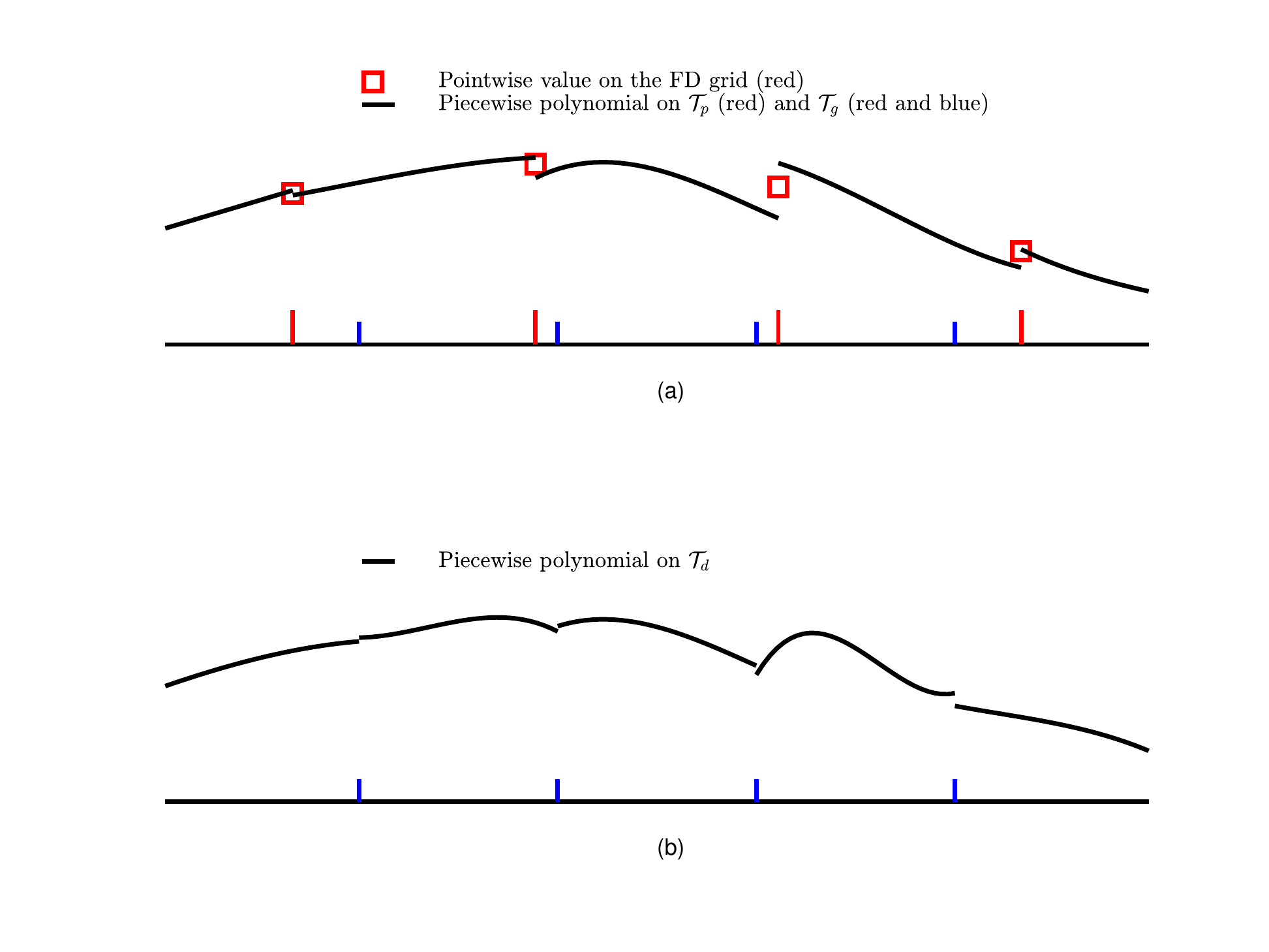}
\caption{ (a) The FD pointwise values on the FD grid, and the corresponding piecewise polynomial on mesh $\mathcal{T}_f$ whose element boundaries are defined by the grid points on the FD grid. The piecewise polynomial is the same on mesh $\mathcal{T}_g$ whose element boundaries are the union of those on $\mathcal{T}_f$ and $\mathcal{T}_d$. (b) The piecewise polynomial on the DG mesh $\mathcal{T}_d$.}
\label{GlueGrid}
\end{figure}

We illustrate in Figure \ref{GlueGrid} how we use projection operators to connect the FD and DG discretizations on the interface. Notation-wise, we use $\mb{x}_f$ to denote a uniform FD grid with $n$ grid points, and $\mathcal{T}_p$ to denote the mesh with element boundaries defined by the grid points on $\mb{x}_f$. In this case, pointwise values are defined on $\mb{x}_f$ and piecewise polynomials are defined on $\mathcal{T}_{p}$. For a piecewise polynomial of degree $q$, there are  $q+1$ DOFs in each element, including two DOFs on the element boundaries and $q-1$ DOFs in the interior of the element. Similarly, we use the notations $\mathcal{T}_{d}$ for the mesh associated with the DG discretization, and $\mathcal{T}_{g}$ for the mesh with element boundaries defined as the union of all element boundaries on $\mathcal{T}_{p}$ and $\mathcal{T}_{d}$.

 As an example, consider the smooth function $f=0.3\cos(\pi x)+0.7\sin(\pi x+2)$ defined on the interface $[-1,1]$. Let vector $\mb{f}$ contain the pointwise evaluation of $f$ on the uniform FD grid $\mb{x}_f$. For clear visualization, we plot only part of $\mb{f}$ in Figure \ref{GlueGrid}a with red-coloured squares.  We use a projection operator $P_{f2p}$ to transform the pointwise values $\mb{f}$ to a piecewise polynomial $\mathcal{P}\mb{f}$ of degree $q=3$, that is,  $\mathcal{P}\mb{f}$ is a cubic polynomial on all the elements on $\mathcal{T}_{p}$ with coefficient vector $\mb{f}_p= P_{f2p} \mb{f}$, see also Figure \ref{GlueGrid}a. The coefficient vector takes the form $\mb{f}_p=[\mb{f}_p^{(1)},\mb{f}_p^{(2)},\cdots,\mb{f}_p^{(n-1)}]^T$, where $\mb{f}_p^{(i)}$ contains the polynomial expansion coefficients in element $i$, i.e. $[x_{i-1},x_i]$.  Here, $\mathcal{P}\mb{f}$ in element $i$ can be expressed as $\sum_{j=0}^{q}  (\mb{f}_p^{(i)})_j \phi^{(i)}_j$, where $\phi^{(i)}_j$ are the local Lagrange basis functions. There is no requirement of continuity at the interface between two adjacent elements. Similarly, the operator $P_{p2f}$ transforms a piecewise polynomial of degree $q$ back to pointwise values on $\mb{x}_f$.  We define the projection errors as
\begin{equation}\label{projerr}
 \mb{e}_{f2p} = \mb{f}_p - \tilde{\mb{f}}_p,\quad \mb{e}_{p2f}=P_{p2f} \tilde{\mb{f}}_p - \mb{f}, 
 \end{equation}
 where $\tilde{\mb{f}}_p$ contains the pointwise evaluation of $f$ on the Lagrange nodes associated with the FD grid. 
 
In general, the element boundaries on $\mathcal{T}_{p}$ do not coincide with those on $\mathcal{T}_{d}$. To this end, we ultilize a glue mesh $\mathcal{T}_{g}$ whose element boundaries are a union of the element boundaries  on $\mathcal{T}_{p}$ and $\mathcal{T}_{d}$, see Figure \ref{GlueGrid}a. As an intermediate step, we perform a basis transformation for the piecewise polynomial on $\mathcal{T}_{p}$ to $\mathcal{T}_{g}$, and use the projection operator $P_{p2g}$ to obtain the weights of the piecewise polynomial. Since the function space $V_h$ defined in  \eqref{Vh} on $\mathcal{T}_{p}$ is a subset of that on $\mathcal{T}_{g}$, the piecewise polynomial itself remains unchanged, illustrated by the fact that there is only one piecewise polynomial plotted in Figure \ref{GlueGrid}a. Next, we perform another basis transformation from $\mathcal{T}_{g}$ to  $\mathcal{T}_{d}$ by the  projection operator $P_{g2d}$ to obtain the piecewise polynomial on $\mathcal{T}_{d}$, see Figure \ref{GlueGrid}b. Analogously, a  piecewise polynomial on $\mathcal{T}_{d}$  can be transformed back to pointwise values on the FD grid by using the operators $P_{d2g}, P_{g2p}$ and $P_{p2f}$. For stability and accuracy, we require that the above projection operators to satisfy a set of constraints, which are presented below from the stability and accuracy perspective.

\paragraph{Stability requirement}
The SBP norm $H$ is associated with the FD grid. Similarly, the mass matrices $M_p, M_g, M_d$ are norms defined on $\mathcal{T}_p$, $\mathcal{T}_g$, $\mathcal{T}_d$, respectively. We note that all four matrices are symmetric positive definite. In particular, $H$ is diagonal, and $M_p, M_g, M_d$ are block-diagonal. The norm compatibility defined below is essential for proving energy stability of the overall semidiscretization.

\begin{definition}[Norm compatibility]
The projection operators are said to be norm compatible if they satisfy 
\begin{align*}
H P_{p2f} &= (M_p P_{f2p})^T,\\
 M_p P_{g2p}& = (M_g P_{p2g})^T,\\
 M_g P_{d2g}& = (M_d P_{g2d})^T.
\end{align*}
\end{definition}

An immediately consequence of the norm compatibility property is stated in the following corollary. 
\begin{corollary}
Let $P_{f2d}=P_{g2d} P_{p2g} P_{f2p}$ and $P_{d2f}=P_{p2f} P_{g2p} P_{d2g}$. We have 
\begin{align}\label{nc_fddg}
H P_{d2f} &= (M_d P_{f2d})^T.
\end{align}
\end{corollary}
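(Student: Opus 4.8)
The plan is to establish \eqref{nc_fddg} by a direct telescoping computation, using the three norm-compatibility relations of the preceding definition together with the symmetry of the four norm matrices $H, M_p, M_g, M_d$. First I would expand the left-hand side with the definition of $P_{d2f}$,
\[
H P_{d2f} = H P_{p2f}\, P_{g2p}\, P_{d2g},
\]
and then peel off one projection at a time, working from the FD norm $H$ toward the DG norm $M_d$.

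Concretely, the first substitution uses $H P_{p2f} = (M_p P_{f2p})^T$; since $M_p$ is symmetric this equals $P_{f2p}^T M_p$, giving $H P_{d2f} = P_{f2p}^T M_p P_{g2p} P_{d2g}$. Applying the second relation $M_p P_{g2p} = (M_g P_{p2g})^T = P_{p2g}^T M_g$ yields $P_{f2p}^T P_{p2g}^T M_g P_{d2g}$, and the third relation $M_g P_{d2g} = (M_d P_{g2d})^T = P_{g2d}^T M_d$ then produces
\[
H P_{d2f} = P_{f2p}^T\, P_{p2g}^T\, P_{g2d}^T\, M_d.
\]
The final step is to recognize that a product of transposes is the transpose of the reversed product, so that $P_{f2p}^T P_{p2g}^T P_{g2d}^T = (P_{g2d} P_{p2g} P_{f2p})^T = P_{f2d}^T$, and hence $H P_{d2f} = P_{f2d}^T M_d = (M_d P_{f2d})^T$, the last equality again by symmetry of $M_d$.

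I do not expect any genuine obstacle here; the argument is a routine chaining of the three hypotheses. The only points that demand care are purely clerical: keeping track of the order of the rectangular factors (the projection operators map between spaces of different dimension, so the composite is well defined only in the stated order), and remembering that each transpose simplifies cleanly precisely because the mass matrices are symmetric. Provided the norm-compatibility identities are used in the order $H \to M_p \to M_g \to M_d$, the reversed product assembles exactly into the definition of $P_{f2d}$, which closes the proof.
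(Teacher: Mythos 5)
Your proof is correct and is exactly the telescoping argument the paper has in mind when it calls the corollary an ``immediate consequence'' of norm compatibility (the paper omits the details entirely). The chain of substitutions $H \to M_p \to M_g \to M_d$, combined with symmetry of the mass matrices and reversal of the product under transposition, is precisely the intended verification.
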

The operator $P_{f2d}$ transforms the pointwise values on the FD grid directly to the polynomial expansion coefficients on the DG side. Similarly, the other operator $P_{d2f}$ transforms a discontinuous piecewise polynomial on the DG side to pointwise values on the FD grid. They satisfy the norm compatibility with respect to the SBP norm and the DG mass matrix. In Section \ref{sec_es}, we use the operators $P_{f2d}$ and $P_{d2f}$ in the semidiscretization, and the relation \eqref{nc_fddg} in the stability analysis. We remark that a similar norm compatibility property is also required for the stability at an FD-FD nonconforming interface, see \cite{Almquist2019,Mattsson2010,Wang2018,Wang2016}. 

\paragraph{Accuracy requirement}
First, we consider the projection operators $P_{f2p}$ and $P_{p2f}$. It is natural to require that the errors \eqref{projerr} vanish for polynomials up to a certain degree, and this requirement is different for the grid points in the interior of the interface and the grid points near the edges of the interface. In the interior, the projection operators are based on centred stencils with even order of accuracy $q_i$. Equivalently, the projection error \eqref{projerr} in the interior is zero for polynomials of degree up to $q_i-1$. However, centred stencils cannot be used near the edges because of a lack of grid points on one side of the stencils. Instead, one-sided stencils are used as the closure for a few grid points near the edges. Because of the norm compatibility requirement, the order of accuracy $q_e$ near the edges is often lower than $q_i$. We denote the order of accuracy of the projection operators as $(q_i, q_e)$.

Next, we consider the other four projection operators, $P_{p2g}$, $P_{g2p}$, $P_{g2d}$, $P_{d2g}$. These are  operators for basis transformation between function spaces, and can be constructed in a straightforward way. Since the piecewise polynomial degree does not change, the projections do not change the order of accuracy. In other words, the orders of accuracy of $P_{d2f}$ and $P_{f2d}$ are determined by $P_{f2p}$ and $P_{p2f}$. 

In \cite{Kozdon2016}, projection operators with order of accuracy $(p_i=2p,p_e=p)$ were constructed and were used with SBP FD operators with the same order of accuracy $(2p,p)$. In numerical experiments, almost $p+1$ convergence rate was observed for the wave equation in the first order form, which is considered as optimal. When the same operators were used to solve the wave equation in the second order form, $p+1$ convergence rate was observed \cite{Wang2018}, which is one order lower than $p+2$ that is often seen for problems with FD-FD conforming interfaces \cite{Duru2014V,Mattsson2008}. A straightforward strategy is to use projection operators with an improved order of accuracy $(p_i=2p,p_e=p+1)$, but in \cite{Lundquist2018} it was proved that there exists no such operator with the norm compatibility property.  In \cite{Almquist2019}, it was found that by using two pairs of interpolation operators, the optimal $p+2$ convergence rate was obtained for the wave equation in the second order form with FD-FD  nonconforming interfaces. Inspired by this work, we construct two pairs of projection operators for the FD-DG interface. 

More precisely, we have the first pair $P_{d2f}^g$ and $P_{f2d}^b$, and the second pair $P_{d2f}^b$ and $P_{f2d}^g$. The superscripts $b$ and $g$, denoting \textit{bad} and \textit{good}, indicate order of accuracy $(2p,p)$ and $(2p,p+1)$, respectively. The two pairs of projection operators are independent from each other, and both pairs satisfy the norm compatibility property \eqref{nc_fddg}. To construct each pair, we set unknowns in only one operator and determine the other one by the norm compatibility condition. The unknowns are then computed by using the accuracy requirement for both operators. 

In Section \ref{sec_es}, we use these operators in the semidiscretization and prove stability. After that, we analyze the truncation error of the semidiscretization in Section \ref{sec_ta}.

\subsection{Numerical fluxes and energy stability}\label{sec_es}

We discretize $\Omega_1$ by a Cartesian grid with $n$ grid points in each spatial direction. Let \[
\mb{w}=[w_{11},w_{12},\cdots,w_{1n}, w_{21}, w_{21}, \cdots, w_{2n},\cdots, w_{n1},w_{n2},\cdots,w_{nn}]^T
\]
 be the finite difference solution with $w_{ij}\approx U(x_i, y_j)$.  The semidiscretization reads
\begin{equation}\label{semiFD}
\begin{split}
\mb{w}_{tt} = b_1\mathcal{D}\mb{w} &+\mathcal{H}^{-1}\left[\f{1}{2}b_1\mb{d}_{\Gamma} H (\mb{w}_{\Gamma}-P_{u2w}^{g}\mb{u}_{\Gamma})-\f{1}{2} \mb{e}_{\Gamma} H (b_1\mb{d}_{\Gamma}^{T}\mb{w}+P_{u2w}^{b}b_2\mb{u}_{\Gamma y})\right] \\
&- \mathcal{H}^{-1}\left[\f{b_1\tauw}{h_1} \mb{e}_{\Gamma}H (\mb{w}_{\Gamma}-P_{u2w}^{g}\mb{u}_{\Gamma})+\f{b_2\sigmaw}{h_2} \mb{e}_{\Gamma}H P_{u2w}^{b}(P_{w2u}^{g}\mb{w}_{\Gamma}-\mb{u}_{\Gamma})\right].
\end{split}
\end{equation}
In the first term on the right-hand side of \eqref{semiFD}, the operator $\mathcal{D}=[D\otimes I+I\otimes D]$ approximates the Laplacian, where $D$ is a second derivative SBP operator defined in \eqref{D2} and $I$ is the identity operator. The remaining terms are SAT for the interface conditions \eqref{inter1}-\eqref{inter2}. More precisely, the first SAT imposes continuity of solution  \eqref{inter1}, where  $\mb{w}_{\Gamma}$ is the FD solution on the interface $\Gamma$, and $\mb{u}_{\Gamma}$ is the DG solution evaluated on the Lagrange nodes. The operator $P_{u2w}^{g}$ projects the DG solution to the FD grid on $\Gamma$. The quantity $\frac{1}{2}\mathcal{H}^{-1}b_1\mb{d}_{\Gamma}$ with  
$\mb{d}_{\Gamma}=-I\otimes \mb{d}_1$ and $\mathcal{H}=H\otimes H$ is the weight for the penalization of $\mb{w}_{\Gamma}-P_{u2w}^{g}\mb{u}_{\Gamma}$. 
Similarly, the second SAT imposes continuity of flux \eqref{inter2}, where $\mb{e}_{\Gamma}= I\otimes \mb{e}_1$, $\mb{d}_{\Gamma}^{T}\mb{w}$ and $\mb{u}_{\Gamma y}$ are the normal derivatives of the FD and DG solution, respectively.  The last two SAT in \eqref{semiFD} impose continuity of solution, where the parameters $\tauw$ and $\sigmaw$ will be chosen in the stability analysis such that a discrete energy estimate is obtained. 

Next, we consider the discretization in $\Omega_2$ by the IPDG method. Let $\mathcal{K}=\{K\}$ be a shape regular triangulation of $\Omega_2=\cup_{K\in\mathcal{K}}K$, and  $h_K$ be the diameter of $K$. The intersection between two adjacent triangles $K^+$, $K^-$ can be an edge, a point or empty. If $K^+\cap K^-$ is an edge, we call it an interior face, and denote $\mathcal{F}^I$ as the set of all interior faces. Similarly, we denote $\mathcal{F}^{\Gamma}$ the set of all interface faces $K\cap \Gamma$, and  $\mathcal{F}^{B}$ the set of all boundary faces $K\cap (\Omega_2/\Gamma)$. 

We define the function space 
\[
V_q = \{  v: v|_K\in\mathcal{P}^q(K), \forall K\in \mathcal{K}\},
\]
where $\mathcal{P}_q(K)$ is the space of polynomials of at most degree $q$ on $K$. 
The semidiscretization reads: for each fixed $t\in (0,T]$, find $u\in V_q$ such that
\begin{equation}\label{semiDG}
\begin{split}
(u_{tt}, \phi)_{\Omega_2} = -(b_2 \nabla u, \nabla \phi)_{\Omega_2} &+ \frac{1}{2}b_2\sum_{F\in \mathcal{F}^{\Gamma}} (u-w_F, \nabla\phi\cdot\mb{n})_{F}+\frac{1}{2}(b_2\nabla u\cdot\mb{n}-b_1 w_F^{\nabla}, \phi)_{F}\\
&-\frac{\tauu}{h}b_2\sum_{F\in \mathcal{F}^{\Gamma}}  (u-{w}_{F}, \phi)_{F}-\frac{\sigmau}{h}b_1(\tilde{\tilde u}-\tilde{w}, \phi)_{F},
\end{split}
\end{equation}
for all $\phi\in V_q$, where 
$w_F = \sum_{j=1}^{q+1}(P_{w2u}^g \mb{w}_{\Gamma})_j^F\phi_j^F$, $w_F^{\nabla}=  \sum_{j=1}^{q+1}(P_{w2u}^b\mb{d}_{\Gamma}^T\mb{w})_j^F\phi_j^F$, $\tilde{\tilde u}  =  \sum_{j=1}^{q+1}(P_{w2u}^b P_{u2w}^g \mb{u}_{\Gamma})_j^F\phi_j^F $, and $\tilde w  =  \sum_{j=1}^{q+1}(P_{w2u}^b  \mb{w}_{\Gamma})_j^F\phi_j^F $. On the right-hand side of \eqref{semiDG}, the first term is obtained by Green's identity. The second, fourth and fifth term impose continuity of solution. The third term imposes continuity of flux and takes into account the boundary term from using Green's identity. The  parameters
 $\tauu$ and $\sigmau$ are determined through stability analysis.
 
\begin{theorem}
The semidiscretization \eqref{semiFD}-\eqref{semiDG} is  stable if $\tauw=\sigmau\geq \frac{1}{4\beta}$ and $\tauu=\sigmaw\geq\frac{1}{C_{tr}}$, where the constant $\beta$ and $C_{tr}$ are independent of the mesh sizes. 
\end{theorem}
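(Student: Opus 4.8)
The plan is to reproduce the one-dimensional energy argument of Theorem \ref{ec1d}, now with the tensor-product SBP structure on the FD side and the variational IPDG structure on the DG side, the projection operators furnishing the bridge between the two discrete inner products. First I would apply the energy method: multiply the FD semidiscretization \eqref{semiFD} from the left by $\mb{w}_t^T\mathcal{H}$ and choose the test function $\phi=u_t$ in the DG weak form \eqref{semiDG}, then add the two resulting scalar identities. On the FD side, the bulk term $b_1\mb{w}_t^T\mathcal{H}\mathcal{D}\mb{w}$ is rewritten through $\mathcal{D}=D\otimes I+I\otimes D$ and the SBP property \eqref{D2} of $D$; since $\mathcal{H}\mathcal{D}=(HD)\otimes H+H\otimes(HD)$ with $HD=-A+\mb{e}_n\mb{d}_n^T-\mb{e}_1\mb{d}_1^T$, this yields $-\tfrac12\tfrac{d}{dt}$ of a symmetric positive semidefinite stiffness form $\mathcal{A}=A\otimes H+H\otimes A$ plus boundary contributions carried by $\mb{e}_\Gamma$ and $\mb{d}_\Gamma$ on $\Gamma$ (the contributions on the remaining edges of $\partial\Omega_1$ are controlled by the prescribed outer boundary conditions and set aside). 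On the DG side, Green's identity turns the volume term into $-\tfrac12\tfrac{d}{dt}(b_2\nabla u,\nabla u)_{\Omega_2}$, and the symmetric interior-penalty fluxes on $\mathcal{F}^I$ and $\mathcal{F}^B$ are absorbed into the coercive IPDG energy form by the classical analysis \cite{Grote2006}, so the analysis reduces to the interface faces $\mathcal{F}^\Gamma$.

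The crucial step is to show that the terms coupling the two subdomains across $\Gamma$ combine into perfect time derivatives of interface bilinear forms together with manifestly sign-definite penalties. Here I would invoke the norm-compatibility relation \eqref{nc_fddg} in its two flavours, $HP_{d2f}^g=(M_dP_{f2d}^b)^T$ and $HP_{d2f}^b=(M_dP_{f2d}^g)^T$, satisfied by the two pairs of projection operators. These identities make the FD-to-DG transfer weighted by $H$ and the DG-to-FD transfer weighted by $M_d$ exactly adjoint, so that the symmetric-flux cross terms such as $\mb{w}_t^T\mb{e}_\Gamma H P_{u2w}^g\mb{u}_\Gamma$ pair with and cancel their DG counterparts, while the surviving mixed terms assemble into $\tfrac{d}{dt}$ of interface quadratic forms. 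The coefficients $b_1,b_2$ must be tracked carefully: since the good and bad operators sit in different positions, and since each indefinite cross term must be matched to a penalty and to a normal-derivative square of the correct weight, this matching is exactly what forces the parameter pairings $\tauw=\sigmau$ and $\tauu=\sigmaw$.

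Once the rate is organised as $\tfrac12\tfrac{d}{dt}E_h$ modulo a residual interface quadratic form, I would supply the squared normal-derivative terms needed to dominate the remaining indefinite cross terms and so certify $E_h\ge0$. On the FD side these come from the borrowing technique of Lemma \ref{lemB} applied in the interface-normal direction: the stiffness form splits as $b_1\mb{w}^T\mathcal{A}\mb{w}=b_1\mb{w}^T\tilde{\mathcal{A}}\mb{w}+\beta b_1 h\,(\mb{d}_\Gamma^T\mb{w})^TH(\mb{d}_\Gamma^T\mb{w})$ with $\tilde{\mathcal{A}}$ symmetric positive semidefinite, freeing a term proportional to the $H$-weighted square of the FD normal derivative on $\Gamma$. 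On the DG side the analogous square is provided by the trace/inverse inequality with constant $C_{tr}$, which bounds $(b_2\nabla u,\nabla u)_{\Omega_2}$ from below by a multiple of $h^{-1}$ times the squared normal flux $b_2\nabla u\cdot\mb{n}$ on $\mathcal{F}^\Gamma$. Completing the square in the solution-continuity jumps and in the flux mismatch then makes the two residual blocks non-negative precisely under $\tauw=\sigmau\ge\frac{1}{4\beta}$ and $\tauu=\sigmaw\ge\frac{1}{C_{tr}}$, giving $E_h\ge0$ and $\tfrac{d}{dt}E_h\le 0$, the asserted stability.

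I expect the main obstacle to be the interface bookkeeping in the second step. Unlike the one-dimensional case, where the coupling terms are scalars and a single parameter $\tau$ suffices, here every interface term is a quadratic form threaded through four distinct projection operators and two material constants, and the cancellation of the indefinite cross terms relies on applying the correct norm-compatibility flavour---pairing each $g$ operator with the matching $b$ operator---to every term. A secondary difficulty is verifying that the two completions of the square decouple cleanly, so that the block closed by the borrowing capacity $\beta$ and the block closed by the trace constant $C_{tr}$ can be tuned by separate parameter pairs rather than by a single combined constant as in one dimension.
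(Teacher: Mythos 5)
Your proposal follows essentially the same route as the paper's proof: energy method with multiplier $\mb{w}_t^T\mathcal{H}$ and test function $\phi=u_t$, the SBP splitting of $\mathcal{H}\mathcal{D}$, the norm-compatibility identities pairing each good projection with its bad adjoint to turn the cross terms into exact time derivatives (which is precisely what forces $\tauw=\sigmau$ and $\tauu=\sigmaw$), and then the borrowing technique on the FD side together with the trace inequality on the DG side to close the two completions of the square separately. The argument and the resulting parameter conditions match the paper's, so no substantive differences to report.
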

 
\begin{proof}
Multiplying \eqref{semiFD} by $\mb{w}_{t}^T\mathcal{H}$, we obtain
\begin{align*}
\mb{w}_{t}^T\mathcal{H}\mb{w}_{tt} = b_1 \mb{w}_t^T \mathcal{HD}\mb{w} + \frac{1}{2}b_1 \mb{w}_t^T \mb{d}_{\Gamma} H (\mb{w}_{\Gamma}-P_{u2w}^{g}\mb{u}_{\Gamma}) -   \frac{1}{2} \mb{w}_t^T \mb{e}_{\Gamma} H (b_1\mb{d}_{\Gamma}^T \mb{w}+P_{u2w}^b b_2 \mb{u}_{\Gamma y})\\
-b_1\frac{\tau_w}{h_1}\mb{w}_t^T \mb{e}_{\Gamma} H (\mb{w}_{\Gamma}-P_{u2w}^g\mb{u}_{\Gamma}) - b_2\frac{\sigma_w}{h_2} \mb{w}_t^T\mb{e}_{\Gamma}HP_{u2w}^b(P_{w2u}^g \mb{w}_{\Gamma}-u_{\Gamma}).
\end{align*}
On the right-hand side, we use the SBP property \eqref{D2} in the first term and obtain 
\begin{equation}\label{HDtemp}
b_1 \mb{w}_t^T \mathcal{HD}\mb{w} = b_1 \mb{w}_t^T(-A\otimes H - H\otimes A +  \mb{e}_{\Gamma}  H  \mb{d}_{\Gamma}^T)\mb{w}.
\end{equation}
In the above, we have only included the boundary term corresponding to the FD-DG interface. For the four SAT, we use the norm compatibility property \eqref{nc_fddg} to eliminate all projection operators with superscript $b$. After combining with \eqref{HDtemp}, we have 
\small
\begin{align}\label{energy_FD}
\begin{split}
\mb{w}_{t}^T\mathcal{H}\mb{w}_{tt} =& b_1\mb{w}_{t}^T (-A\otimes H - H\otimes A +  \mb{e}_{\Gamma}  H  \mb{d}_{\Gamma}^T)\mb{w} \\
&+\frac{1}{2}b_1 \mb{w}_t^T \mb{d}_{\Gamma} H \mb{w}_{\Gamma}-\frac{1}{2}b_1 \mb{w}_t^T \mb{d}_{\Gamma} H P_{u2w}^{g}\mb{u}_{\Gamma} -\frac{1}{2}b_1 \mb{w}_t^T \mb{e}^{\Gamma} H \mb{d}_{\Gamma}^T \mb{w} -\frac{1}{2}b_2 \mb{w}_t^T \mb{e}_{\Gamma} (M P_{w2u}^g)^T \mb{u}_{\Gamma y} \\
&-b_1\frac{\tau_w}{h_1}\mb{w}_t^T \mb{e}_{\Gamma} H \mb{w}_{\Gamma}+b_1\frac{\tau_w}{h_1}\mb{w}_t^T \mb{e}_{\Gamma} H  P_{u2w}^g\mb{u}_{\Gamma}- b_2\frac{\sigma_w}{h_2} \mb{w}_t^T\mb{e}_{\Gamma}(MP_{w2u}^g)^T P_{w2u}^g \mb{w}_{\Gamma}+b_2\frac{\sigma_w}{h_2} \mb{w}_t^T\mb{e}_{\Gamma}(MP_{w2u}^g)^T \mb{u}_{\Gamma}\\
=&\frac{d}{dt}\left[  -\frac{1}{2}b_1\mb{w}^T (A\otimes H + H\otimes A) \mb{w} +  \frac{1}{2}b_1\mb{w}_{\Gamma}^T H \mb{d}_{\Gamma}^T \mb{w}-\frac{1}{2}b_1\frac{\tauw}{h_1} \mb{w}_{\Gamma}^T H \mb{w}_{\Gamma} - \frac{1}{2}b_2\frac{\sigmaw}{h_2} (P_{w2u}^g \mb{w}_{\Gamma})^T M (P_{w2u}^g \mb{w}_{\Gamma}) \right]\\
&-\frac{1}{2}b_1 (\mb{d}_{\Gamma}^T \mb{w}_t)^T H P_{u2w}^g \mb{u}_{\Gamma} -\frac{1}{2}b_2 (P_{w2u}^g\mb{e}_{\Gamma}^T \mb{w}_t)^T M \mb{u}_{\Gamma y} +b_1\frac{\tauw}{h_1} (\mb{e}_{\Gamma}^T \mb{w}_t)^T H P_{u2w}^g \mb{u}_{\Gamma} + b_2\frac{\sigmaw}{h_2} (P_{w2u}^g \mb{e}_{\Gamma}^T \mb{w}_t)^T M \mb{u}_{\Gamma}. 
\end{split}
\end{align}
\normalsize

Next, we consider the DG discretization \eqref{semiDG}. By choosing $\phi=u_t$, we have 
\begin{equation}\label{semiDG2}
\begin{split}
(u_{tt}, u_t)_{\Omega_2} = -(b_2 \nabla u, \nabla u_t)_{\Omega_2} &+ \frac{1}{2}b_2\sum_{F\in \mathcal{F}^{\Gamma}} (u-w_F, \nabla u_t \cdot\mb{n})_{F}+\frac{1}{2}(b_2\nabla u\cdot\mb{n}-b_1 w_F^{\nabla}, u_t)_{F}\\
&-\frac{\tauu}{h}b_2\sum_{F\in \mathcal{F}^{\Gamma}}  (u-{w}_{F}, u_t)_{F}-\frac{\sigmau}{h}b_1(\tilde{\tilde u}-\tilde{w}, u_t)_{F},
\end{split}
\end{equation}
For the numerical fluxes on the right-hand side of \eqref{semiDG2}, we write in a matrix form as $\sum_{F\in \mathcal{F}^{\Gamma}} (p,q)_F=\mb{p}^T M\mb{q}$, and use the norm compatibility property \eqref{nc_fddg} to eliminate all projection operators with superscript $b$,
\begin{equation}\label{energy_DG}
\begin{split}
(u_{tt}, u_t)_{\Omega}=&\frac{d}{dt}\left[ -\frac{1}{2}b_2 \|\nabla u\|^2_{\Omega} + \frac{1}{2}b_2 \mb{u}_{\Gamma}^T M \mb{u}_{\Gamma y}-\frac{1}{2}b_2\frac{\tauu}{h_2} \mb{u}_{\Gamma}^T M \mb{u}_{\Gamma} -\frac{1}{2}b_1\frac{\sigmau}{h_1} (P_{u2w}^g\mb{u}_{\Gamma})^T H P_{u2w}^g\mb{u}_{\Gamma}   \right] \\
&-\frac{1}{2}b_2 (P_{w2u}^g \mb{w}_{\Gamma})^T M (\mb{u}_{\Gamma y})_t - \frac{1}{2} b_1 ( \mb{d}_{\Gamma}^T\mb{w})^T H P_{u2w}^g  (\mb{u}_{\Gamma })_t \\
&+ b_2\frac{\tauu}{h_2} (P_{w2u}^g \mb{w}_{\Gamma})^T M (\mb{u}_{\Gamma })_t + b_1 \frac{\sigmau}{h_1}(\mb{w}_{\Gamma})^T HP_{u2w}^g (\mb{u}_{\Gamma})_t.
\end{split}
\end{equation}

We collect the mixed terms from \eqref{energy_FD} and \eqref{energy_DG}, i.e. the terms that are not inside the square-brackets for the time derivative $d/dt$. By requiring $\sigmaw=\tauu$ and $\sigmau=\tauw$, we can write all mixed terms in the form of the time derivative,
\begin{equation}\label{mixed}
\begin{split}
\frac{d}{dt}\left[ -\frac{1}{2}b_1 (\mb{d}_{\Gamma}^T\mb{w})^THP_{u2w}^g \mb{u}_{\Gamma}-\frac{1}{2}b_2 (P_{w2u}^g\mb{w}_{\Gamma})^T M\mb{u}_{\Gamma y} +b_1\frac{\tauw}{h_1}\mb{w}_{\Gamma}^T H P_{u2w}^g \mb{u}_{\Gamma} + b_2\frac{\tauu}{h_2} (P_{w2u}^g \mb{w}_{\Gamma})^T M \mb{u}_{\Gamma} \right].
\end{split}
\end{equation}
Combining \eqref{energy_FD}, \eqref{energy_DG} and \eqref{mixed}, we obtain 
\[
\frac{1}{2}\frac{d}{dt}\left(\mb{w}_t^T \mathcal{H} \mb{w} + (u_t, u_t)_{\Omega}\right)=\frac{d}{dt}(\tilde E_1+\tilde E_2),
\]
where 
\begin{align*}
\tilde E_1=& -\frac{1}{2}b_1 \mb{w}^T (A\otimes H+H\otimes A)\mb{w} +  \frac{1}{2}b_1 \mb{w}_{\Gamma} H \mb{d}_{\Gamma}^T \mb{w} -\frac{1}{2}b_1\frac{\tauw}{h_1} \mb{w}_{\Gamma}^T H \mb{w}_{\Gamma}\\
&-\frac{1}{2}b_1
\frac{\tauw}{h_1} (P_{u2w}\mb{u}_{\Gamma})^TH(P_{u2w}\mb{u}_{\Gamma})-\frac{1}{2}b_1 (\mb{d}_{\Gamma}^T \mb{w})^TH(P_{u2w}^g\mb{u}_{\Gamma})+b_1\frac{\tauw}{h_1}(\mb{w}_{\Gamma})^T H(P_{u2w}^g\mb{u}_{\Gamma}),\\
\tilde E_2=& -\frac{1}{2}b_2 \|\nabla u\|^2_{\Omega} + \frac{1}{2}b_2 \mb{u}_{\Gamma} M \mb{u}_{\Gamma y}  -\frac{1}{2}b_2\frac{\tauu}{h_2}\mb{u}_{\Gamma}^T M \mb{u}_{\Gamma} \\
&-\frac{1}{2}b_2\frac{\tauu}{h_2}(P_{w2u}\mb{w}_{\Gamma})^TM(P_{w2u}\mb{w}_{\Gamma})-\frac{1}{2}b_2(P_{w2u}^g\mb{w}_{\Gamma})^T M \mb{u}_{\Gamma y}+b_2\frac{\tauu}{h_2} (P_{w2u}^g \mb{w}_{\Gamma})^T M \mb{u}_{\Gamma}.
\end{align*}

In the following, we determine the penalty parameters such that $\tilde E_1\leq 0$ and $\tilde E_2\leq 0$. We start with $\tilde E_1$, and write 
\small
\begin{align*}
\tilde E_1=& -\frac{1}{2}b_1 \mb{w}^T (A\otimes H+H\otimes A)\mb{w} -\frac{1}{2}b_1\frac{\tauw}{h_1}(\mb{w}_{\Gamma}-P_{u2w}^g \mb{u}_{\Gamma})^T H (\mb{w}_{\Gamma}-P_{u2w}^g \mb{u}_{\Gamma}) + \frac{1}{2}b_1 (\mb{d}_{\Gamma}^T\mb{w})^T H(\mb{w}_{\Gamma}-P_{u2w}^g\mb{u}_{\Gamma})\\
=& -\frac{1}{2}b_1 \mb{w}^T (A\otimes H+H\otimes A)\mb{w} \\
&-\frac{1}{2}b_1\frac{\tauw}{h_1}(\mb{w}_{\Gamma}-P_{u2w}^g \mb{u}_{\Gamma}-\frac{h_1}{2\tauw}\mb{d}_{\Gamma}^T\mb{w})^T H (\mb{w}_{\Gamma}-P_{u2w}^g \mb{u}_{\Gamma}-\frac{h_1}{2\tauw}\mb{d}_{\Gamma}^T\mb{w}) +b_1\frac{h_1}{8\tauw}(\mb{d}_{\Gamma}^T\mb{w})^TH(\mb{d}_{\Gamma}^T\mb{w})
\end{align*}
\normalsize
To control the last term on the right-hand side, we use the borrowing technique,
\[
\mb{w}^T (H\otimes A) \mb{w}=\mb{w}^T (H\otimes \tilde A) \mb{w}+\mb{w}^T (H\otimes (\beta h_1\mb{d}_1\mb{d}_1^T))\mb{w}=\mb{w}^T (H\otimes \tilde A) \mb{w}+\beta h_1   (\mb{d}_{\Gamma}^T\mb{w})^TH(\mb{d}_{\Gamma}^T\mb{w}).
\]
To guarantee $\tilde E_1\leq 0$, we require
\[
-\frac{1}{2}b_1\beta h_1+b_1\frac{h_1}{8\tauw}\leq 0\Rightarrow \tauw\geq\frac{1}{4\beta}.
\]
Similarly, by using the trace inequality $\|\nabla u\cdot\mb{n}\|_{\Gamma}^2\leq C_{tr}h_2^{-1} \|\nabla u\|_{\Omega}^2$, the condition $\tauu\geq\frac{1}{4C_{tr}}$ guarantees that $\tilde E_2\leq 0$. This concludes the proof. 
\end{proof}

\subsection{Truncation error}\label{sec_ta}
To preserve the convergence rate $p+2$, the truncation error of the FD penalty terms and DG fluxes must be small enough. To be precise, we shall distinguish DOFs that are in the interior of the interface from DOFs near the edges, because of different projection errors. By using the accuracy properties of the SBP operators and the projection operators, we analyze the truncation error of each SAT in the FD semidiscretization and of each numerical flux in the DG semidiscretization, see the result in Table \ref{tab_truncationerror}. 

\begin{table}[]
    \centering
    \begin{tabular}{ccc}
        SAT/Flux & $Interior$ & $Edge$     \\
        \hline 
        1 & $2p-2$ & $p-1$ \\
        2 & $p$ & $p-1$  \\
        3 & $2p-2$ & $p-1$  \\
        4 & $2p-2$   &$p-1$   \\
            \hline
    \end{tabular}
    \caption{Truncation errors in the SAT on the FD side and the numerical fluxes on the DG side}
    \label{tab_truncationerror}
\end{table}

In the interior of the interface, the projection error is $\mathcal{O}(h^{2p})$. The weights in the first, third and fourth SAT/flux include a factor $h^{-2}$, thus resulting a truncation error  $\mathcal{O}(h^{2p-2})$. In the second SAT/flux, however, the truncation error is dictated by the first derivative approximation of order $p+1$, when combined with a weight of $h^{-1}$, the truncation error is $\mathcal{O}(h^{p})$. Since the number of DOFs in the interior of the interface is $\mathcal{O}(h^{-1})$, we expect a gain of two order in convergence rate, i.e. $\min(2p,p+2)$. 

Next, we consider the truncation error on a few grid points near the edges of the interface, where the projection error is $\mathcal{O}(h^{p+1})$ for the projection operators with superscript $g$ and $\mathcal{O}(h^{p})$ for the projection operators with superscript $b$. The same calculation shows that all the four SAT/flux have a truncation error $\mathcal{O}(h^{p-1})$. Thus, a gain of three orders is needed for a convergence rate $p+2$, which can be expected because the number of grid points with truncation error $\mathcal{O}(h^{p-1})$ is $\mathcal{O}(1)$ and the total number of grid points is $\mathcal{O}(h^{-2})$. The theoretical analysis is out of the scope of this work, but a gain of three orders for a simplified model problem was analyzed in \cite{Wang2018b}. Indeed, we observe a convergence rate $p+2$ in the numerical experiments for the case $p=2$.

\section{Numerical experiments}
In this section, we present numerical experiments for the FD-DG discretization for the 2D wave equation. We start with a verification of the convergence rate by using an example based on Snell's law. After that, to demonstrate robustness we consider an example with a complex geometry cannot be easily resolved by using a curvilinear grid technique. 

\subsection{Verification of convergence rate}
Consider the wave equation 
\begin{align}
U_{tt}&=\nabla \cdot b_1\nabla U,\quad \text{ in } \Omega_1,\label{Ub1}\\
U_{tt}&=\nabla \cdot b_2\nabla U,\quad \text{ in } \Omega_2,\label{Ub2}
\end{align}
with piecewise constant material property $b_1=1$ and $b_2=0.25$, and $\Omega_1=[0,10]^2$, $\Omega_2=[0,10]\times[-2,0]$. At the material interface $y=0$, we prescribe continuity of solution and flux \eqref{inter1}-\eqref{inter2}. To close the problem, we impose Dirichlet boundary conditions at all boundaries. By using Snell's law, an analytical solution takes the form 
\begin{align*}
U=\begin{cases}
\cos(x+y-\sqrt{2b_1}t) + k_2\cos(x-y+\sqrt{2b_1}t), \quad \text{ in }\Omega_1,\\
(1+k_2)\cos(k_1x+y-\sqrt{2b_1}t), \quad \text{ in }\Omega_2,
\end{cases}
\end{align*}
where $k_1 = \sqrt{2b_1/b_2-1}$ and $k_2 = (b_1-k_1b_2)/(b_1+k_1b_2)$. We plot the solution at $t=0$ in Figure \ref{SnellInitialData}, and observe the shorter wavelength in $\Omega_2$ because of the slower wave speed. In the numerical experiments, we use this analytical solution to obtain initial and boundary data. 

For spatial discretization, we use the SBP operators with order of accuracy (4,2) on a Cartesian grid in $\Omega_1$. The IPDG method with local polynomials of degree three is used in $\Omega_2$. We consider two different mesh configurations in $\Omega_2$. First, for the structured mesh in Figure \ref{SnellMesh3} and the unstructured mesh in \ref{SnellMesh1},  the vertices of the triangles on the interface coincide with the grid points on the FD side. Since there are ten DOFs in each DG element and four DOFs on each triangle edge, the DOFs on the interface from the two discretizations do not coincide. Second, for the structured mesh in Figure \ref{SnellMesh4} and the unstructured mesh in Figure \ref{SnellMesh2}, the vertices of the triangles on the interface coincide with every third grid point on the FD side. When positioning the DG DOFs on the principal lattice, the Lagrange nodes on the interface coincide with the FD grid points on the interface. We note that even in this case, the projection operators in the semidiscretization cannot be identity, because the discrete FD and DG norms are not the same on the interface. 

\begin{figure}
 \begin{subfigure}[b]{0.19\textwidth}
\centering
\includegraphics[trim={4cm 0 4cm 0},clip,width=0.97\textwidth]{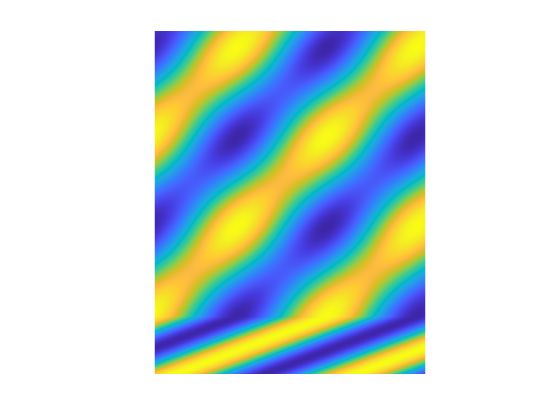}
\caption{}
\label{SnellInitialData}
     \end{subfigure}
\begin{subfigure}[b]{0.19\textwidth}
\centering
\includegraphics[trim={4cm 0 4cm 0},clip,width=0.99\textwidth]{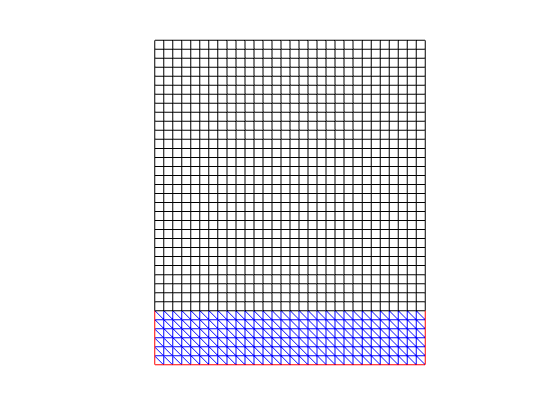}
     \caption{}
     \label{SnellMesh3}
\end{subfigure}
      \begin{subfigure}[b]{0.19\textwidth}
      \centering
\includegraphics[trim={4cm 0 4cm 0},clip,width=0.99\textwidth]{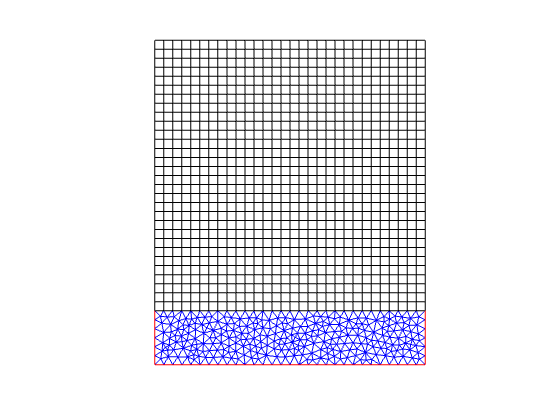}
\caption{}
\label{SnellMesh1}
     \end{subfigure}
\begin{subfigure}[b]{0.19\textwidth}
\centering
\includegraphics[trim={4cm 0 4cm 0},clip,width=0.99\textwidth]{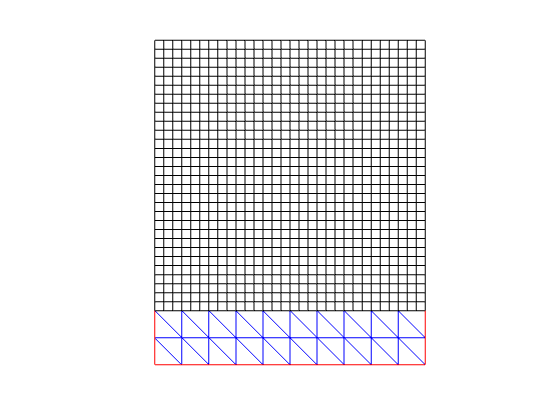}
     \caption{}
     \label{SnellMesh4}
\end{subfigure}
\begin{subfigure}[b]{0.19\textwidth}
\centering
\includegraphics[trim={4cm 0 4cm 0},clip,width=0.99\textwidth]{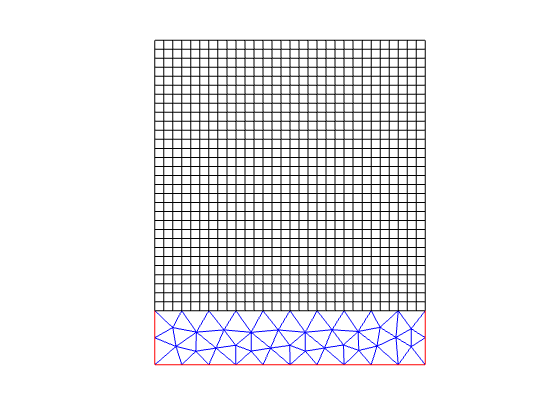}
     \caption{}
     \label{SnellMesh2}
\end{subfigure}
\caption{(a) The Snell's law solution at $t=0$. (b-e) Meshes}
\end{figure}

In the semidiscretization \eqref{semiFD}-\eqref{semiDG} as a system of second order ordinary differential equations (ODEs), we discretize the time variable by the fourth order accurate modified equation method \cite{Gilbert2008}. We choose the final time $T=2$,  and a time step small enough so that the error in the solution is dominated by the spatial approximation. In Table \ref{tab_convergence}, we present the $l^2$ errors and the corresponding convergence rates for the four types of meshes in Figure \ref{SnellMesh3}-\ref{SnellMesh2}. We observe a fourth order convergence rate in all cases. 

\begin{table}[]
    \centering
    \begin{tabular}{lllll}
        $n$ & Error 1 (rates) & Error 2 (rates) & Error 3 (rates) & Error 4 (rates)     \\
        \hline 
        31 & $3.6864 \times 10^{-2}$              &   $3.5864 \times 10^{-2}$             & $9.2747\times 10^{-2}$ &          $1.7052\times 10^{-2}$ \\
        61 & $1.5993 \times 10^{-3} (4.53)$    &   $1.5946 \times 10^{-3} (4.49)$  & $5.4464\times 10^{-3}(4.09)$ & $1.6190\times 10^{-3}(3.40)$ \\
        121 &$0.9695\times 10^{-4} (4.04)$    &   $0.9027\times 10^{-4} (4.14)$   & $3.2015\times 10^{-4}(4.09)$ &$9.0579\times 10^{-5}(4.16)$ \\
        241 &$0.5644\times 10^{-5} (4.10)$    &   $0.5525\times 10^{-5} (4.03)$   & $2.0770\times 10^{-5}(3.95)$& $4.7953\times 10^{-6}(4.24)$ \\
            \hline
    \end{tabular}
    \caption{Convergence rates. Error 1-4 correspond to the mesh configurations in Figure \ref{SnellMesh3}-\ref{SnellMesh2}, respectively. The value $n$ is the number of grid points in $\Omega_1$ in each spatial direction.}
    \label{tab_convergence}
\end{table}

\subsection{Complex geometry}
In this numerical example, we consider a layered medium with complex geometry. The computational domain consists of two subdomains  $\Omega_1=[0,1]\times[0,1.5]$ and $\Omega_2=[0,1]\times[-0.5,0]$, with the same governing equation \eqref{Ub1}-\eqref{Ub2} and material properties as in the previous example, $b_1=1$ and $b_2=0.25$. In $\Omega_2$, there are three cavities of irregular shapes, see Figure \ref{ComplexGeometryMesh}. The complex geometry is very difficult to resolve by using curvilinear grids. Instead, we use an unstructured mesh and discretize the governing equation by the IPDG method. In $\Omega_1$, the SBP FD method on a Cartesian grid is used for the spatial discretization. 

At time $t=0$, we initialize a Gaussian profile $U(x,y,0)= 10e^{-1000((x-0.5)^2+(y+0.3)^2)}$ plotted in Figure \ref{ComplexGeometryt0}. We set the velocity to be zero and  impose homogeneous Dirichlet boundary conditions at the outer boundaries and the cavity boundaries. The solutions at $t=0.3, 0.5$ and 0.9 in Figure \ref{ComplexGeometryt3}-\ref{ComplexGeometryt9} show the wave interaction with the cavities. At $t=1.5$ in Figure \ref{ComplexGeometryt1.5}, the wave has passed the material interface to $\Omega_1$ and the wavelength becomes larger. In the last plot in Figure \ref{ComplexGeometryt10}, the wave has spread in the entire computational domain. 
It is clear that the waves are well-resolved and the number method is stable.
\begin{figure}
\centering 
\begin{subfigure}[b]{0.13\textwidth}
\includegraphics[width= .99\textwidth,trim={7cm 1cm 6.5cm 1cm},clip]{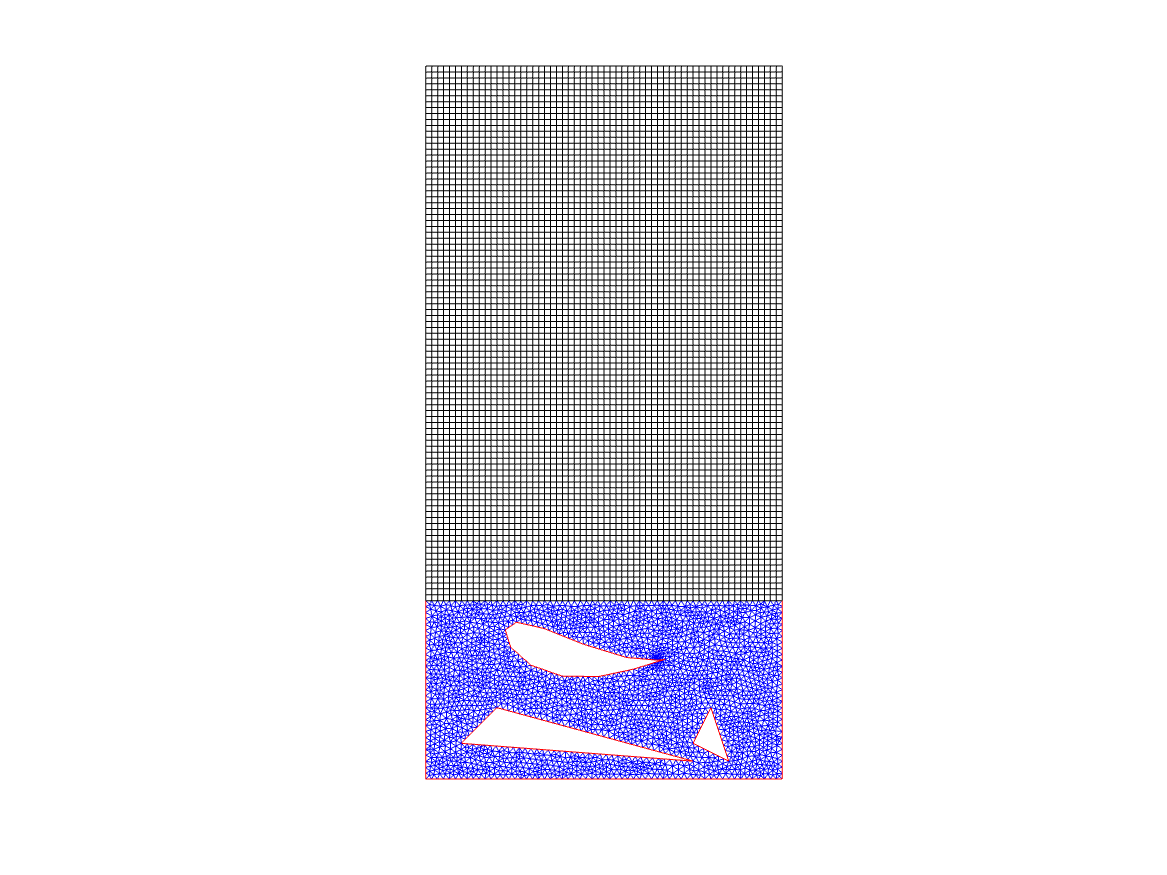}
         \caption{}
         \label{ComplexGeometryMesh}
     \end{subfigure}
\begin{subfigure}[b]{0.13\textwidth}
\includegraphics[width= .99\textwidth,trim={7cm 1cm 6.5cm 1cm},clip]{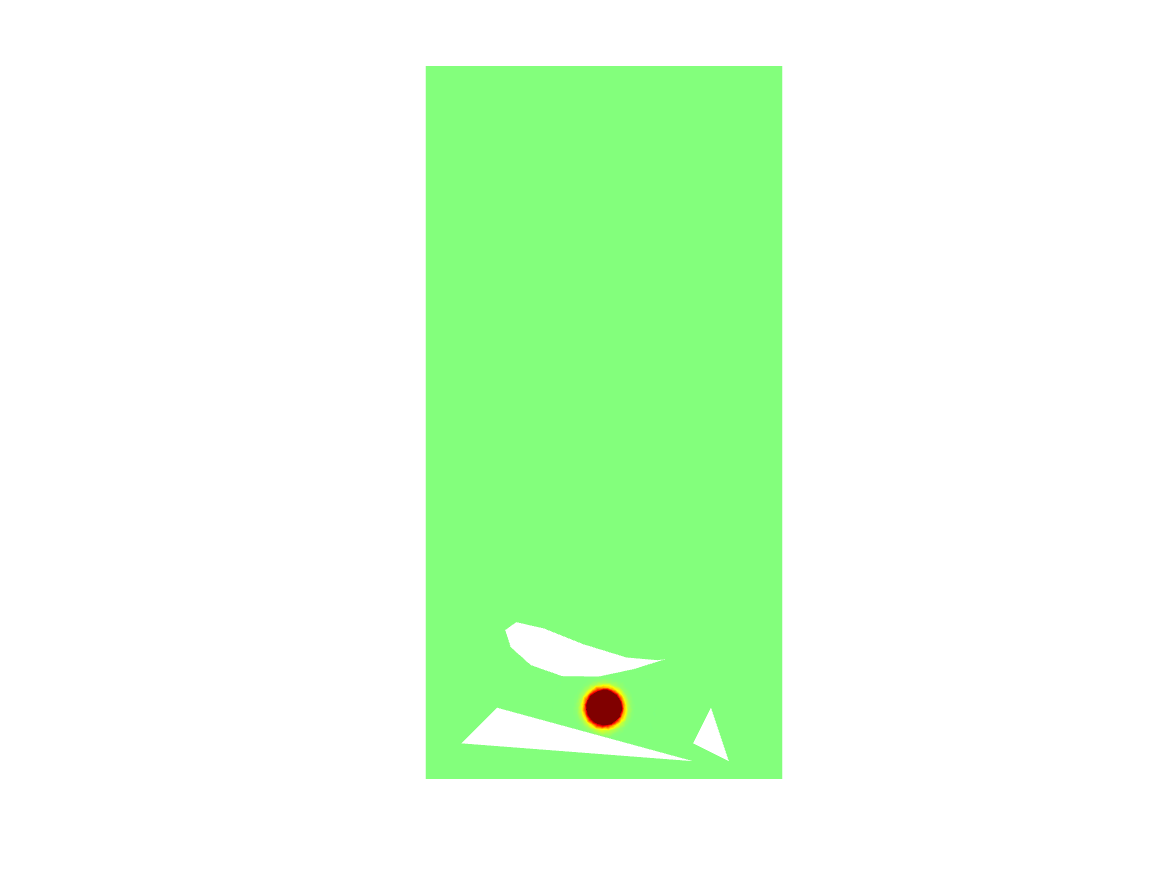}
         \caption{}
                  \label{ComplexGeometryt0}
     \end{subfigure}
\begin{subfigure}[b]{0.13\textwidth}
\includegraphics[width= .99\textwidth,trim={7cm 1cm 6.5cm 1cm},clip]{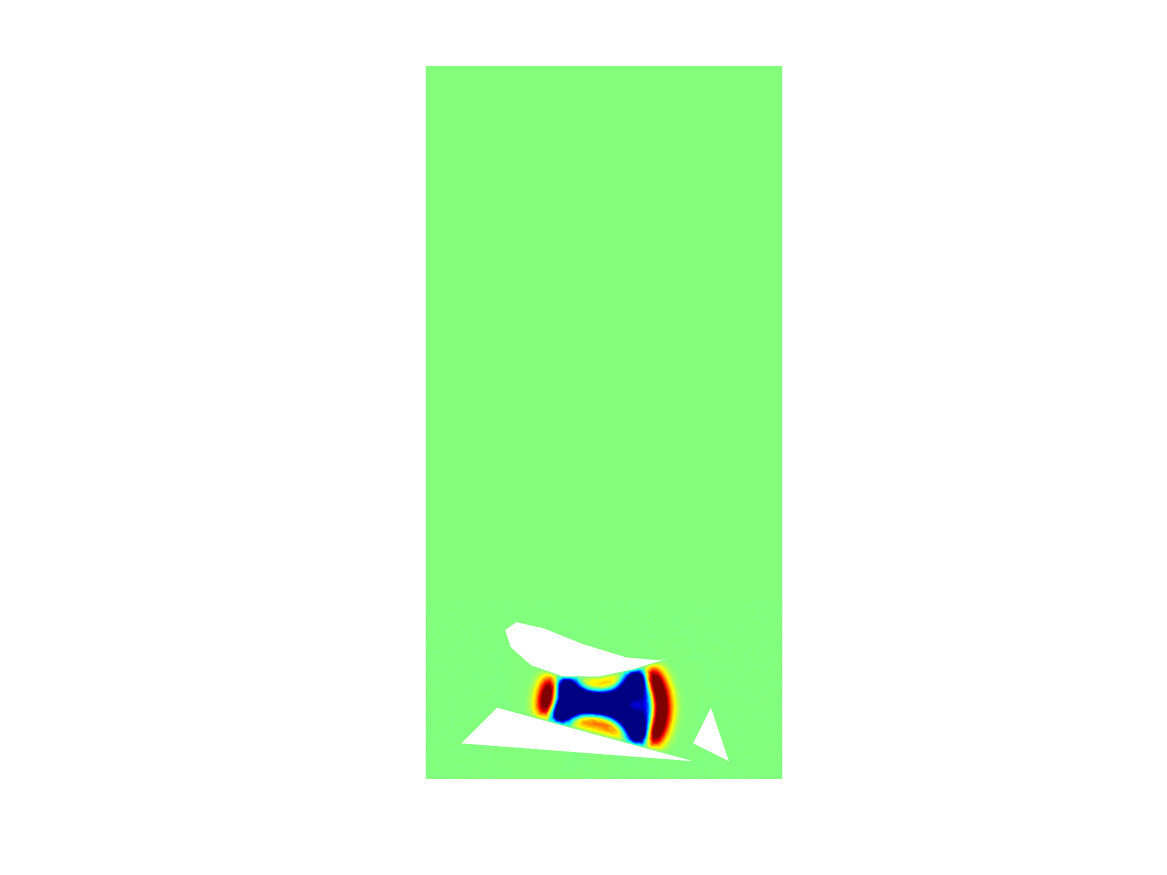}
         \caption{}
                           \label{ComplexGeometryt3}
     \end{subfigure}
\begin{subfigure}[b]{0.13\textwidth}
\includegraphics[width= .99\textwidth,trim={7cm 1cm 6.5cm 1cm},clip]{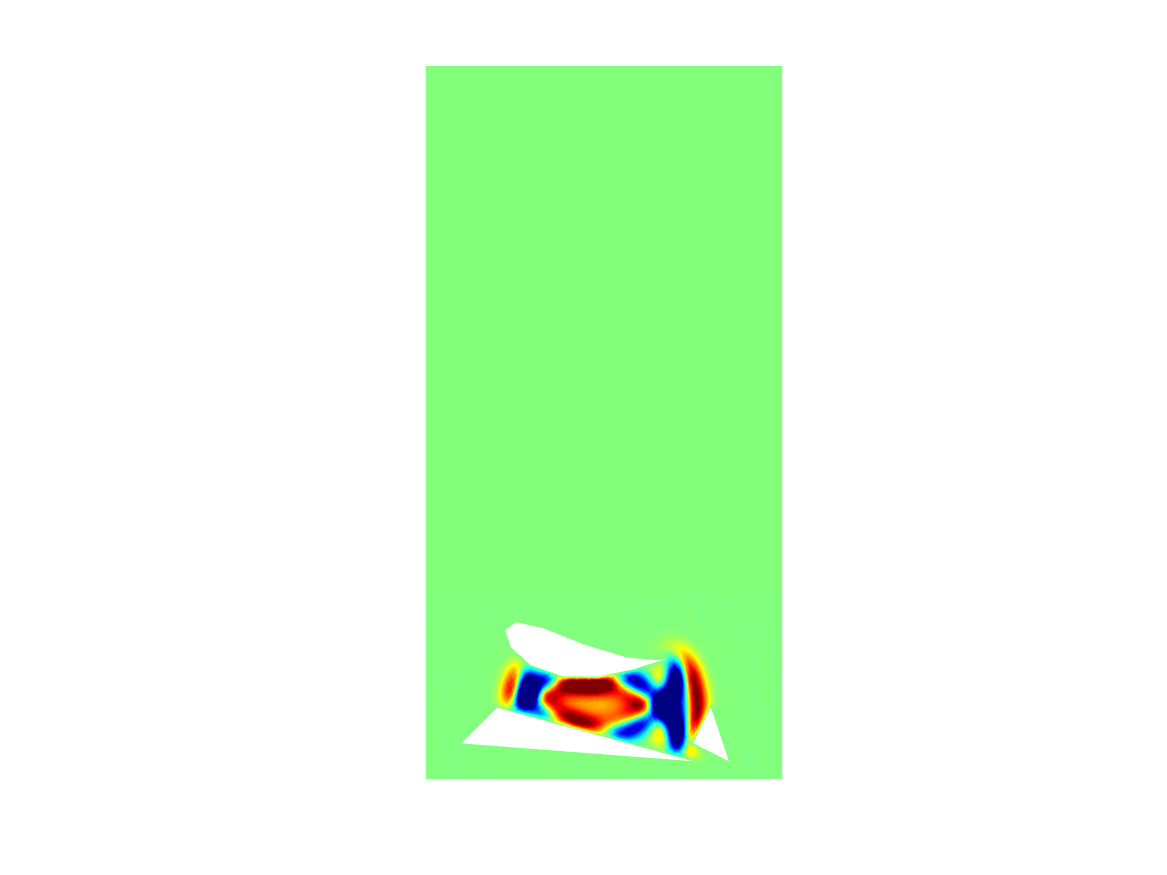}
         \caption{}
     \end{subfigure}
\begin{subfigure}[b]{0.13\textwidth}
\includegraphics[width= .99\textwidth,trim={7cm 1cm 6.5cm 1cm},clip]{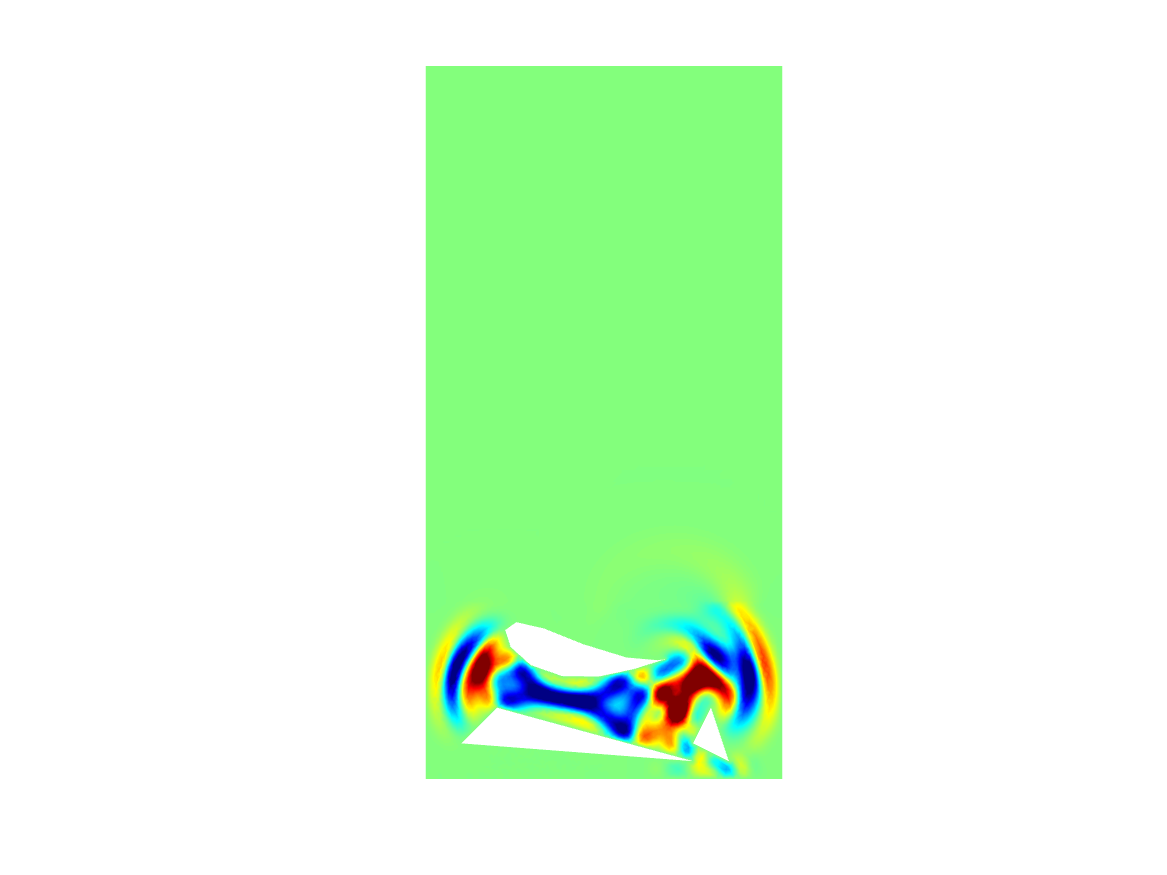}
         \caption{}                        
          \label{ComplexGeometryt9}
     \end{subfigure}
\begin{subfigure}[b]{0.13\textwidth}
\includegraphics[width= .99\textwidth,trim={7cm 1cm 6.5cm 1cm},clip]{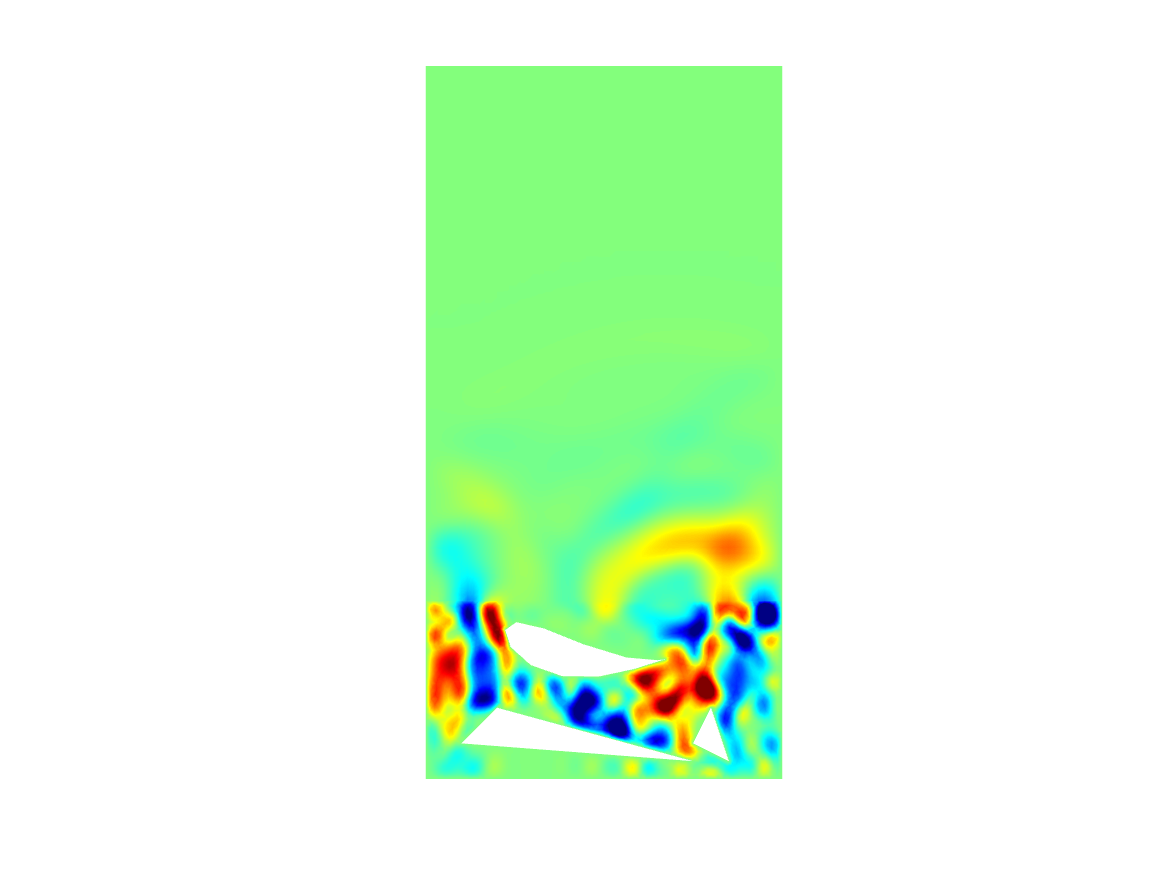}
         \caption{}
         \label{ComplexGeometryt1.5}
     \end{subfigure}
\begin{subfigure}[b]{0.13\textwidth}
\includegraphics[width= .99\textwidth,trim={7cm 1cm 6.5cm 1cm},clip]{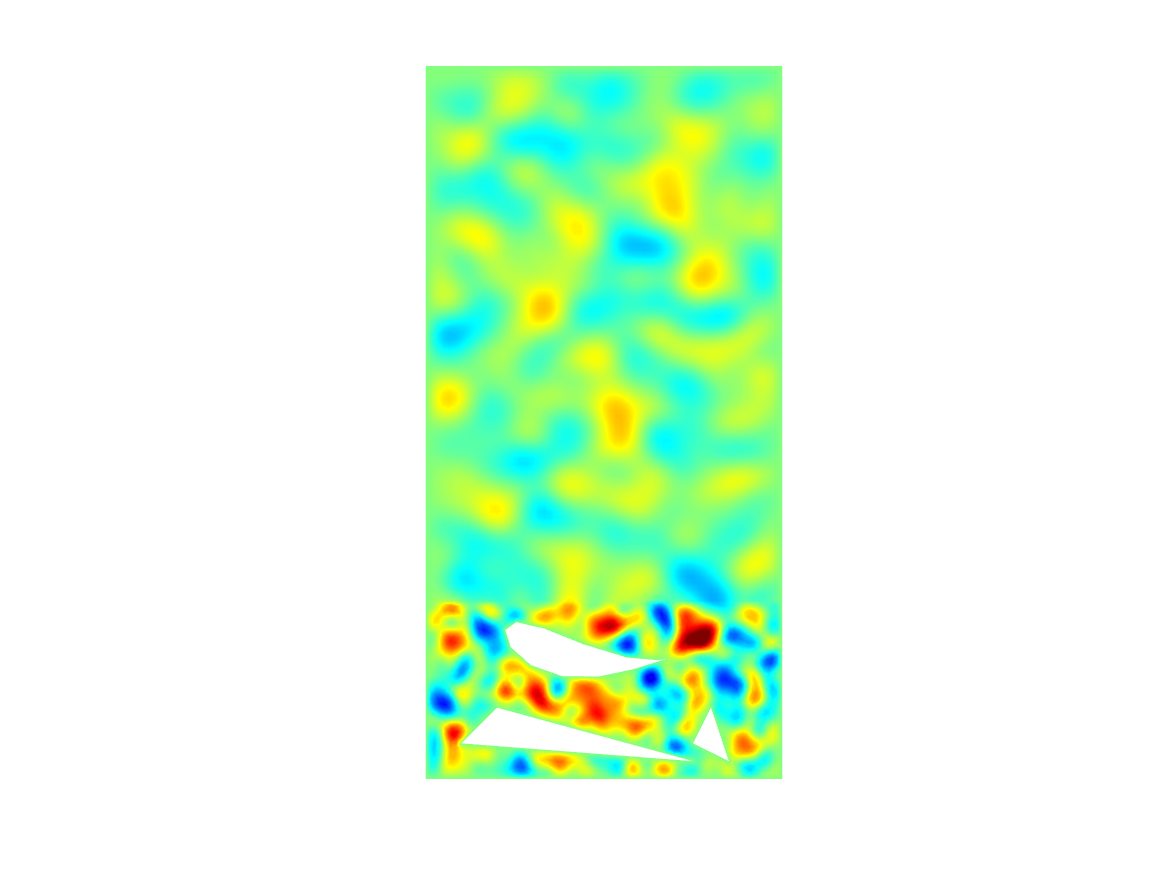}
         \caption{}
                  \label{ComplexGeometryt10}
     \end{subfigure}
\caption{(a): 2D domain with a Cartesian grid on the top layer and a triangulation in the bottom layer.  (b)-(g): numerical solution at increasing times $t=0, 0.3, 0.5, 0.9, 1.5, 10$. }
\label{HybridMesh}
\end{figure}

\section{Conclusion}
We have developed an FD-DG discretization for the wave equation in second order form in two space dimension. The FD discretization is based on SBP operators on Cartesian grids. In the region with complex geometry or heterogeneous material property, the IPDG discretization on structured or unstructured meshes are used. We use the penalty technique to couple the FD and DG solutions. For this, we have constructed projection operators to move between pointwise FD solutions and DG solutions in a  space of piecewise polynomials. The projection operators are compatible with respect to the discrete norms from the FD and DG side, resulting an energy estimate for the overall semidiscretization.  In addition, the convergence rate of the FD-DG discretization is optimal, in the sense that it is the same as when one method is used in the entire domain.  
The hybridization combines computational efficiency of high-order finite differences and geometric flexibility of the discontinuous Galerkin technique.

The second main contribution is a new framework for deriving error estimates for the FD-DG discretization. On the DG side, we use a non-traditional approach by realizing the weak form as difference stencils and compute the truncation errors. It is well-known that the order of truncation error of these difference stencils are lower than the expected convergence rate. By exploring an analogue of  the Galerkin orthogonality, we prove the sharp error bounds for the DG discretization away from the interface by the energy method. We then use the normal mode analysis for the accuracy property at the FD-DG interface. In the end, we combine the error estimates in the interior and close to the interface. 

The FD-DG discretization finds immediately applications in other second order hyperbolic PDEs. In a coming work, we will consider the elastic wave equation modeling seismic wave propagation. Additionally, we will investigate local time stepping techniques \cite{Grote2021,Rietmann2017} to address different time step restrictions from the FD and DG discretizations.   

\bibliographystyle{siamplain}
\bibliography{/Users/siyangwang/Documents/Research/Siyang_References}

\end{document}